\DeclareMathOperator*{\myinf}{in\vphantom{p}f}
\begin{document}

\title[]{A compactness result in approach theory with an application to the continuity approach structure}
\author{Ben Berckmoes}
\dedicatory{Dedicated to Eva Colebunders on the occasion of her 65th birthday}
\date{}

\keywords{approach theory, compactness, continuity approach structure, Prokhorov's Theorem, uniform distance, weak topology}
\thanks{The author is postdoctoral fellow at the Fund for Scientific Research of Flanders (FWO)}

\maketitle

\newtheorem{pro}{Proposition}[section]
\newtheorem{lem}[pro]{Lemma}
\newtheorem{thm}[pro]{Theorem}
\newtheorem{de}[pro]{Definition}
\newtheorem{co}[pro]{Comment}
\newtheorem{no}[pro]{Notation}
\newtheorem{vb}[pro]{Example}
\newtheorem{vbn}[pro]{Examples}
\newtheorem{gev}[pro]{Corollary}
\newtheorem{vrg}[pro]{Question}

\newtheorem{proA}{Proposition}
\newtheorem{lemA}[proA]{Lemma}
\newtheorem{thmA}[proA]{Theorem}
\newtheorem{deA}[proA]{Definition}
\newtheorem{coA}[proA]{Comment}
\newtheorem{noA}[proA]{Notation}
\newtheorem{vbA}[proA]{Example}
\newtheorem{vbnA}[proA]{Examples}
\newtheorem{gevA}[proA]{Corollary}
\newtheorem{vrgA}[proA]{Question}

\newtheorem{proB}{Proposition}
\newtheorem{lemB}[proB]{Lemma}
\newtheorem{thmB}[proB]{Theorem}
\newtheorem{deB}[proB]{Definition}
\newtheorem{coB}[proB]{Comment}
\newtheorem{noB}[proB]{Notation}
\newtheorem{vbB}[proB]{Example}
\newtheorem{vbnB}[proB]{Examples}
\newtheorem{gevB}[proB]{Corollary}
\newtheorem{vrgB}[proB]{Question}

\newcommand{\Lin}{\textrm{\upshape{Lin}}\left(\left\{\xi_{n,k}\right\}\right)}
\newcommand{\Eta}{\textrm{\upshape{H}}}
\newcommand{\Zeta}{\textrm{\upshape{Z}}}

\hyphenation{frame-work}
\hyphenation{dif-fe-rent}
\hyphenation{a-vai-la-ble}
\hyphenation{me-tric}
\hyphenation{to-po-lo-gi-cal}
\hyphenation{con-ti--nu-ous-ly}
\hyphenation{de-pen-ding}
\hyphenation{ne-gli-gi-ble}
\hyphenation{de-ri-va-tive}

\begin{abstract}
We establish a compactness result in approach theory which we apply to obtain a generalization of Prokhorov's Theorem for the continuity approach structure.
\end{abstract}

\section{Introduction}

Measures of non-compactness (\cite{BG80}) have been studied extensively in the context of approach theory (\cite{LoIn}), both on an abstract level (\cite{BL94}, \cite{BL95}) as in specific approach settings in e.g. hyperspace theory (\cite{LS000}), functional analysis (\cite{LS00}), function spaces (\cite{L04}) and probability theory (\cite{BLV11}). The presence of a vast literature on the interplay between compactness and approach theory is explained by the fact that the latter is a canonical setting which allows for a unified treatment of the classical concept of measure of non-compactness (\cite{L88}).

In this paper we contribute to the knowledge on the interplay between compactness and approach theory. In Section 2 we provide a new compactness result for a general approach space. In Section 3 we apply this result to the specific setting of the so-called continuity approach structure (\cite{BLV13}, \cite{LoIn}) to obtain a quantitative generalization of Prokhorov's Theorem. 

\section{A compactness result in approach theory}

Let $X$ be an approach space with approach system $\mathcal{A} = \left(\mathcal{A}_x\right)_{x \in X}$. We first recall some notions related to compactness in $X$. For more details the reader is referred to \cite{LoIn}.

We say that $X$ is {\em locally countably generated} iff there exists a basis $(\mathcal{B}_x)_{x \in X}$ for  $\mathcal{A}$ such that each $\mathcal{B}_{x}$ is countable.

For $x \in X$, $\phi \in \mathcal{A}_x$ and $\epsilon > 0$ we define the {\em $\phi$-ball with center $x$ and radius $\epsilon$}\index{ball!$\phi$-ball} as the set $B_{\phi}(x,\epsilon) = \left\{y \in X \mid \phi(y) < \epsilon\right\}$. More loosely, we also refer to the latter set as a ball with center $x$ or a ball with radius $\epsilon$. 

Consider a point $x \in X$, a sequence $\left(x_n\right)_n$ in $X$ and $\epsilon > 0$. We say that $\left(x_n\right)_n$ is {\em $\epsilon$-convergent} to $x$ iff each ball $B$ with center $x$ and radius $\epsilon$ contains $x_n$  for all $n$ larger than a certain $n_B$. We write $x_n \stackrel{\epsilon}{\rightarrow} x$ to indicate that $\left(x_n\right)_n$ is $\epsilon$-convergent to $x$. We define the {\em limit operator} of $\left(x_n\right)_n$ at $x$ as 
\begin{displaymath}
\lambda\left(x_n \rightarrow x\right) = \inf\left\{\alpha > 0 \mid x_n \stackrel{\alpha}{\rightarrow} x\right\}.
\end{displaymath}

We call $X$ {\em sequentially complete} iff it holds for each sequence $(x_n)_n$ in $X$ that $\inf_{x \in X} \lambda_{\mathcal{A}}(x_n \rightarrow x) = 0$ implies the existence of a point $x_0$ to which $(x_n)_n$ converges (in the topological coreflection).

Let $A \subset X$ be a set. We say that $A$ is {\em $\epsilon$-relatively sequentially compact} iff every sequence in $A$ contains a subsequence which is $\epsilon$-convergent and we define the {\em relative sequential compactness index} of $A$ as 
\begin{displaymath}
\chi_{rsc}(A) = \inf \left\{\alpha > 0 \mid \textrm{$A$ is $\alpha$-relatively sequentially compact}\right\}. 
\end{displaymath}
Notice that relatively sequentially compact sets (in the topological coreflection) have relative sequential compactness index zero, but that the converse does not necessarily hold. 

If $\left(\Phi = \left(\phi_x\right)_x\right) \in \Pi_{x \in X} \mathcal{A}_x,$
then a set $B \subset X$ is called a {\em $\Phi$-ball} iff there exist $x \in X$ and $\alpha > 0$ such that $B = B_{\phi_x}(x,\alpha)$. We call $A$ {\em$\epsilon$-relatively compact} iff it holds for each $\Phi \in \Pi_{x \in X} \mathcal{A}_x$ that $A$ can be covered with finitely many $\Phi$-balls with radius $\epsilon$ and we define the {\em relative compactness index} of $A$ as 
\begin{displaymath}
\chi_{rc}(A) = \inf \{\alpha > 0 \mid \textrm{$A$ is $\alpha$-relatively compact}\}.
\end{displaymath}  
 
We say that $X$ is {\em $\epsilon$-Lindel{\"o}f} iff it holds for each $\Phi \in \Pi_{x \in X} \mathcal{A}_x$ that $X$ can be covered with countably many $\Phi$-balls with radius $\epsilon$ and we define the {\em Lindel{\"o}f index} of $X$ as
\begin{displaymath}
\chi_{L}(X) = \inf \left\{\alpha > 0 \mid \textrm{$X$ is $\alpha$-Lindel{\"o}f}\right\}.
\end{displaymath}

Theorem \ref{thm:LocalCompactnessLinks}, the main result of this section, interconnects the above notions. For its proof we use the following well-known lemma which belongs to the heart of approach theory (\cite{LoIn}).

\begin{lem}[Lowen]\label{lem:RegQM}
Let $\mathcal{D}_{\mathcal{A}}$ be the set of quasi-metrics $d$ on $X$ with the property that $d(x,\cdot) \in \mathcal{A}_x$ for each $x \in X$. Then the assignment of collections
\begin{displaymath}
\mathcal{B}_{\mathcal{D}_{\mathcal{A},x}} = \{d(x,\cdot) \mid d \in \mathcal{D}_{\mathcal{A}}\}
\end{displaymath}  
is a basis for $\mathcal{A}$.
\end{lem}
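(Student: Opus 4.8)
The plan is to verify directly the two defining conditions for $(\mathcal{B}_{\mathcal{D}_{\mathcal{A}},x})_{x\in X}$ to be a basis for $\mathcal{A}$. The inclusion $\mathcal{B}_{\mathcal{D}_{\mathcal{A}},x}\subseteq\mathcal{A}_x$ holds by the very definition of $\mathcal{D}_{\mathcal{A}}$, so everything reduces to the approximation property: given $x\in X$, $\phi\in\mathcal{A}_x$, a real number $\omega\geq 0$ and $\epsilon>0$, one must produce a quasi-metric $d\in\mathcal{D}_{\mathcal{A}}$ with $\phi\wedge\omega\leq d(x,\cdot)+\epsilon$.

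To construct such a $d$, I would first invoke the fourth approach-system axiom (A4): applied to $\phi\in\mathcal{A}_x$ with truncation level $\omega$, it provides a family $(\phi_y)_{y\in X}$ with $\phi_y\in\mathcal{A}_y$ for every $y$ and
\[
\phi(z)\wedge\omega\ \leq\ \phi(y)+\phi_y(z)\qquad(y,z\in X).
\]
Then I would define $d$ by ``chaining'' this family, setting
\[
d(u,v)\ =\ \inf\Bigl\{\,\textstyle\sum_{i=1}^{m}\phi_{y_{i-1}}(y_i)\ \Bigm|\ m\geq 1,\ y_0,\dots,y_m\in X,\ y_0=u,\ y_m=v\,\Bigr\}
\]
for $u,v\in X$; the whole point of the chaining is that it builds the triangle inequality into $d$ for free.

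That $d$ is a quasi-metric and that $d\in\mathcal{D}_{\mathcal{A}}$ should then be routine. Indeed, $d$ is an infimum of costs of finite chains, so concatenating a chain from $u$ to $w$ with one from $w$ to $v$ yields $d(u,v)\leq d(u,w)+d(w,v)$, and the chain $(u,u)$, of cost $\phi_u(u)=0$ by the first approach-system axiom (A1), gives $d(u,u)=0$. Moreover the one-step chain $(u,v)$ shows $d(u,\cdot)\leq\phi_u$, and since $\phi_u\in\mathcal{A}_u$ and $\mathcal{A}_u$ is an ideal in $[0,\infty]^X$, it follows that $d(u,\cdot)\in\mathcal{A}_u$ for every $u$, whence $d\in\mathcal{D}_{\mathcal{A}}$. (If one insists that quasi-metrics be finite-valued, one simply replaces $d$ by $d\wedge\omega$, which is still a quasi-metric and still lies below every $\phi_u$.)

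The remaining step — the lower bound $\phi\wedge\omega\leq d(x,\cdot)$ — is the only one requiring an argument, and I expect it to be the main (if mild) obstacle. The plan is to prove by induction on $k$ that along every chain $x=y_0,y_1,\dots,y_m$ one has $\sum_{i=1}^{k}\phi_{y_{i-1}}(y_i)\geq\phi(y_k)\wedge\omega$ for $0\leq k\leq m$; the base case $k=0$ reads $0\geq\phi(x)\wedge\omega$, true because $\phi(x)=0$, and taking the infimum over chains from $x$ to a point $v$ then gives $d(x,v)\geq\phi(v)\wedge\omega$. The delicate point in the induction step is that the truncation in (A4) is asymmetric (only the left-hand side of the displayed inequality is truncated), so one cannot telescope naively; instead one splits into the cases $\phi(y_k)\leq\omega$ and $\phi(y_k)>\omega$, applying the displayed inequality in the first case and, in the second, observing that the partial sum has already reached $\omega$ by the induction hypothesis and can only grow further along the chain. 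Once this observation is in place, assembling the pieces shows that $(\mathcal{B}_{\mathcal{D}_{\mathcal{A}},x})_{x\in X}$ is a basis for $\mathcal{A}$.
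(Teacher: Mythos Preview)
The paper does not actually prove this lemma: it is quoted as a well-known result of Lowen with a reference to his monograph, so there is no in-paper argument to compare against. Your chaining construction is essentially the standard route to this fact---building a quasi-metric from the family supplied by the regularity axiom and taking infima over finite chains---and the outline is sound.

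One point deserves tightening. In Lowen's usual formulation, axiom (A4) carries an additive slack: one only obtains $\phi(z)\wedge\omega \leq \phi(y)+\phi_y(z)+\epsilon$. If that $\epsilon$ is fed straight into your chain induction, the errors accumulate with the (unbounded) chain length and the lower bound $d(x,\cdot)\geq\phi\wedge\omega$ collapses. The remedy fits your scheme cleanly: set $\psi_y(z)=\bigl((\phi(z)\wedge\omega)-\phi(y)\bigr)\vee 0$; by (A4) one has $\psi_y\leq\phi_y^{\epsilon}+\epsilon$ for every $\epsilon>0$, so the saturation axiom forces $\psi_y\in\mathcal{A}_y$, and this family satisfies your displayed inequality with no $\epsilon$ at all. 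With that one extra line, the remainder of your argument---concatenation giving the triangle inequality, the ideal property yielding $d(u,\cdot)\in\mathcal{A}_u$, and the two-case induction for the lower bound---goes through exactly as written.
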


\begin{thm}\label{thm:LocalCompactnessLinks}
Let $X$ be locally countably generated. Then, for any set $A \subset X$,  
\begin{displaymath}
\chi_{rsc}(A) \leq \chi_{rc}(A) \leq \chi_{rsc}(A) + \chi_{L}(X). 
\end{displaymath}
In particular, if $\chi_{L}(X) = 0$, then 
\begin{displaymath}
\chi_{rsc}(A) = \chi_{rc}(A). 
\end{displaymath}
If, in addition, $X$ is sequentially complete, then 
\begin{displaymath}
\textrm{$A$ is relatively sequentially compact} \Leftrightarrow \chi_{rsc}(A) = 0.
\end{displaymath}
\end{thm}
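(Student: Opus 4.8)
The plan is to prove the two displayed inequalities — the ``in particular'' clause and the equivalence then follow with little extra effort — arguing each time by contradiction after recording two routine facts. First, for each of the three indices the set of admissible radii is an up-set (a larger ball covers a smaller one with the same centre), so a radius strictly above an index already forces the corresponding property. Second, if $\mathcal{B}_y$ is a countable basis for $\mathcal{A}_y$, enumerated as $(\psi^y_j)_j$, then the partial suprema $\widetilde\psi^y_j = \psi^y_1 \vee \cdots \vee \psi^y_j$ again form a basis for $\mathcal{A}_y$, now increasing in $j$; this is the gateway through which the hypothesis of local countable generation enters.

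For $\chi_{rsc}(A) \leq \chi_{rc}(A)$, suppose not and fix $\chi_{rc}(A) < \epsilon_0 < \epsilon_1 < \chi_{rsc}(A)$, so $A$ is $\epsilon_0$-relatively compact but not $\epsilon_1$-relatively sequentially compact. Choose a sequence $(x_n)_n$ in $A$ no subsequence of which is $\epsilon_1$-convergent; since a constant subsequence would be topologically convergent, hence $\epsilon_1$-convergent, the set $\{x_n \mid n\}$ is infinite. I claim that for each $y \in X$ there is $\phi_y \in \mathcal{A}_y$ with $\phi_y(x_n) \geq \epsilon_0$ for all but finitely many $n$. Indeed, with $\widetilde S_j = \{n \mid \widetilde\psi^y_j(x_n) < \epsilon_0\}$, if some $\widetilde S_j$ is finite take $\phi_y = \widetilde\psi^y_j$; otherwise all $\widetilde S_j$ are infinite, and since $\widetilde S_1 \supseteq \widetilde S_2 \supseteq \cdots$ a diagonal choice $n_1 < n_2 < \cdots$ with $n_k \in \widetilde S_k$ gives $\widetilde\psi^y_j(x_{n_m}) < \epsilon_0$ whenever $m \geq j$, which the basis property promotes to $\phi(x_{n_m}) < \epsilon_1$ eventually for every $\phi \in \mathcal{A}_y$, that is, $(x_{n_m})_m \stackrel{\epsilon_1}{\rightarrow} y$ — excluded by the choice of $(x_n)_n$. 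The functions $\phi_y$ assemble to $\Phi = (\phi_y)_y \in \Pi_{x \in X} \mathcal{A}_x$, and every $\Phi$-ball of radius $\epsilon_0$ meets the infinite set $\{x_n \mid n\}$ in a finite set, so no finite family of $\Phi$-balls of radius $\epsilon_0$ can cover $A$, contradicting $\epsilon_0$-relative compactness. Setting up this dichotomy at every point and assembling $\Phi$ from it is, I expect, the crux of the argument, and it is exactly where local countable generation is indispensable.

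For $\chi_{rc}(A) \leq \chi_{rsc}(A) + \chi_L(X)$, fix $\epsilon > \chi_{rsc}(A)$, $\delta > \chi_L(X)$ and $\eta > 0$; I will show that $A$ is $(\epsilon + \delta + \eta)$-relatively compact and then let $\epsilon \downarrow \chi_{rsc}(A)$, $\delta \downarrow \chi_L(X)$ and $\eta \downarrow 0$. Given an arbitrary $\Phi = (\phi_y)_y$, use Lemma \ref{lem:RegQM} to pick for each $y$ a quasi-metric $d_y \in \mathcal{D}_{\mathcal{A}}$ with $\phi_y \wedge N \leq d_y(y,\cdot) + \eta/2$ for a large fixed $N$, so that $B_{d_y(y,\cdot)}(y, \epsilon + \delta + \eta/2) \subseteq B_{\phi_y}(y, \epsilon + \delta + \eta)$; it suffices to cover $A$ by finitely many balls of the first kind. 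By the $\delta$-Lindel{\"o}f property applied to $\Psi = (d_y(y,\cdot))_y \in \Pi_{x \in X} \mathcal{A}_x$, write $X = \bigcup_k B_{d_{y_k}(y_k,\cdot)}(y_k,\delta)$. If no finite subfamily of the balls $B_{d_{y_k}(y_k,\cdot)}(y_k, \epsilon + \delta + \eta/2)$ covered $A$, we could pick $a_n \in A$ with $d_{y_k}(y_k,a_n) \geq \epsilon + \delta + \eta/2$ for all $k \leq n$; a subsequence $(a_{n_j})_j$ would then be $\epsilon$-convergent to some $x \in X$ by $\epsilon$-relative sequential compactness, with $x$ in some $B_{d_{y_{k_0}}(y_{k_0},\cdot)}(y_{k_0},\delta)$. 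Since $d_{y_{k_0}}(x,\cdot) \in \mathcal{A}_x$, $\epsilon$-convergence would force $d_{y_{k_0}}(x,a_{n_j}) < \epsilon$ eventually, and the triangle inequality for $d_{y_{k_0}}$ would give $d_{y_{k_0}}(y_{k_0},a_{n_j}) < \delta + \epsilon$ eventually, contradicting $d_{y_{k_0}}(y_{k_0},a_{n_j}) \geq \epsilon + \delta + \eta/2$ for $n_j \geq k_0$. Hence finitely many of those balls cover $A$, whence so do finitely many $\Phi$-balls of radius $\epsilon + \delta + \eta$. The delicate point here is just the bookkeeping of the various $\epsilon$'s, so that the slack introduced by Lemma \ref{lem:RegQM} stays strictly inside the available gap.

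The ``in particular'' clause is then immediate. In the equivalence, ``$\Rightarrow$'' is the remark already made in the text; for ``$\Leftarrow$'', assume $\chi_{rsc}(A) = 0$ and $X$ sequentially complete and let $(x_n)_n$ be a sequence in $A$. For each $m$ the set $A$ is $\tfrac1m$-relatively sequentially compact, so by extracting nested subsequences and diagonalizing we obtain one subsequence $(z_k)_k$ of $(x_n)_n$ together with points $y_m \in X$ such that $z_k \stackrel{1/m}{\rightarrow} y_m$, whence $\lambda(z_k \rightarrow y_m) \leq \tfrac1m$, for every $m$. Therefore $\inf_{x \in X}\lambda(z_k \rightarrow x) = 0$, and sequential completeness yields a point to which $(z_k)_k$ — a subsequence of $(x_n)_n$ — converges in the topological coreflection; thus $A$ is relatively sequentially compact.
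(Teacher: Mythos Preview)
Your proof is correct and follows essentially the same route as the paper. For the first inequality you spell out in detail the dichotomy (via an increasing countable basis and a diagonal extraction) that the paper compresses into a single sentence; for the second inequality you argue in the direct direction (from $\epsilon$-relative sequential compactness plus $\delta$-Lindel{\"o}f to $(\epsilon+\delta+\eta)$-relative compactness) rather than by contrapositive as the paper does, but the mechanism---replace the given $\Phi$ by quasi-metric data from Lemma~\ref{lem:RegQM}, take a countable Lindel{\"o}f cover, and use the triangle inequality for $d_{y_{k_0}}$ to trap a would-be $\epsilon$-limit inside one of the enlarged balls---is identical, and your final diagonal argument matches the paper's verbatim.
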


\begin{proof}
Suppose that $A$ is not $\epsilon$-relatively sequentially compact and fix $\epsilon_0 < \epsilon$. Then there exists a sequence $\left(a_n\right)_n$ in $A$ without $\epsilon$-convergent subsequence. But then, for each $x \in X$, there exists $\phi_x \in \mathcal{A}_x$ such that the ball $B_{\phi_x}\left(x,\epsilon_0\right)$ contains at most finitely many terms of $\left(a_n\right)_n$. Indeed, if this was not the case, then the fact that $X$ is locally countably generated would allow us to extract an $\epsilon$-convergent subsequence from $\left(a_n\right)_n$. Put $\Phi = \left(\phi_x\right)_x$. Now one easily sees that $A$ cannot be covered with finitely many $\Phi$-balls with radius $\epsilon_0$. We conclude that $A$ is not $\epsilon_0$-relatively compact. We have shown that 
\begin{displaymath}
\chi_{rsc}(A) \leq \chi_{rc}(A).
\end{displaymath}

Furthermore, let $X$ be $\delta$-Lindel{\"o}f and let $A$ fail to be $\epsilon$-relatively compact, with $\delta < \epsilon$, and fix $\delta < \epsilon_0 < \epsilon$. Then Lemma \ref{lem:RegQM} enables us to choose $\Phi \in \Pi_{x \in X} \mathcal{A}_x$ of the form 
\begin{displaymath}
\Phi = \left(d_x(x,\cdot)\right)_x, 
\end{displaymath}
where each $d_x$ is a quasi-metric in $\mathcal{D}_{\mathcal{A}}$, such that $X$ cannot be covered with finitely many $\Phi$-balls with radius $\epsilon_0$. However, $X$ being $\delta$-Lindel{\"o}f, there is a countable cover $\left(B_n\right)_n$ of $X$ with $\Phi$-balls with radius $\delta$, say with centers $\left(x_n\right)_n$. Now construct a sequence $(a_n)_n$ in $A$ such that, for each $n$, the $\Phi$-ball with center $x_n$ and radius $\epsilon_0$ contains at most finitely many terms of $(a_n)_n$. But then $(a_n)_n$ has no $(\epsilon_0-\delta)$-convergent subsequence. Indeed, if any subsequence $(a_{k_n})_n$ was $(\epsilon_0 - \delta)$-convergent to $x$, then we could choose $n_0$ such that $x \in B_{n_0}$, and then it is not hard to see that the $\Phi$-ball with center $x_{n_0}$ and radius $\epsilon_0$ would contain infinitely many terms of $(a_n)_n$. We conclude that $A$ is not $(\epsilon_0 - \delta)$-relatively sequentially compact. We have established that 
\begin{displaymath}
\chi_{rc}(A) \leq \chi_{rsc}(A) + \chi_{L}(X).
\end{displaymath}

Suppose,  in addition, that $X$ is sequentially complete. Let $\chi_{rsc}(A) = 0.$ Fix a sequence $\left(a_n\right)_n$ in $A$ and carry out the following construction:\\
Choose a subsequence $\left(a_{k_1(n)}\right)_n$ and a point $x_1 \in X$ such that 
\begin{displaymath}
\lambda\left(a_{k_1(n)} \rightarrow x_1\right) \leq 1. 
\end{displaymath}
Choose a further subsequence $\left(a_{k_1 \circ k_2(n)}\right)_n$ and a point $x_2 \in X$ such that 
\begin{displaymath}
\lambda\left(a_{k_1 \circ k_2(n)} \rightarrow x_2\right) \leq 1/2. 
\end{displaymath}
\begin{displaymath}
\ldots 
\end{displaymath}
Choose a further subsequence $\left(a_{k_1 \circ \cdots \circ k_m(n)}\right)_n$ and a point $x_m \in X$ such that 
\begin{displaymath}
\lambda\left(a_{k_1 \circ \cdots \circ k_m(n)} \rightarrow x_m\right) \leq 1/m. 
\end{displaymath}
\begin{displaymath}
\ldots 
\end{displaymath}
Then it holds for the diagonal subsequence 
\begin{displaymath}
\left(a_n^\prime = a_{k_1 \circ \cdots \circ k_n(n)}\right)_n 
\end{displaymath}
that  $\inf_{x \in X} \lambda\left(a_n^\prime \rightarrow x\right) = 0.$ Now the sequential completeness of $X$ allows us to conclude that $\left(a_n^\prime\right)_n$ is convergent. We infer that $A$ is relatively sequentially compact.
\end{proof}

\section{Compactness for the continuity approach structure}

 We start by recalling some basic concepts. They can be found in any standard work on probability theory (e.g. \cite{Kal}).

A {\em cumulative distribution function} (cdf) is a non-decreasing and right-continuous map $F : \mathbb{R} \rightarrow \mathbb{R}$ for which $\displaystyle{\lim_{x \rightarrow - \infty} F(x) = 0}$ and $\displaystyle{\lim_{x \rightarrow \infty} F(x) = 1}$. The collection of (continuous) cdf's is denoted as $\mathcal{F}_{(c)}$.

The {\em weak topology} $\mathcal{T}_w$ on $\mathcal{F}$ is the initial topology for the source
\begin{displaymath}
\left(\mathcal{F} \rightarrow \mathbb{R} : F \mapsto \int_{-\infty}^\infty h(x) dF(x)\right)_{h \in \mathcal{C}_b(\mathbb{R},\mathbb{R})}
\end{displaymath}
with $\mathcal{C}_b(\mathbb{R},\mathbb{R})$ the set of bounded and continuous maps $h : \mathbb{R} \rightarrow \mathbb{R}$.

The {\em uniform distance} between $F$ and $G$ in $\mathcal{F}$ is 
\begin{displaymath}
D_u(F,G) = \sup_{x \in \mathbb{R}} \left|F(x) - G(x)\right|.
\end{displaymath}
 
The {\em convolution product} of $F$ and $G$ in $\mathcal{F}$ is the cdf
\begin{displaymath}
F \star G = \int_{-\infty}^\infty F(\cdot - y) dG(y).
\end{displaymath}
This product is commutative and $F \star G \in \mathcal{F}_c$ if $F \in \mathcal{F}_c$.

The following classical result, which can be found in \cite{Berg}, shows how the previous concepts are interconnected. We denote the underlying topology of $D_u$ as $\mathcal{T}_{D_u}$.

\begin{thm}[Bergstr{\"o}m]\label{thm:Berg}
The source 
\begin{displaymath}
\left(\left(\mathcal{F},\mathcal{T}_w\right) \rightarrow \left(\mathcal{F}_c,\mathcal{T}_{D_u}\right): F \mapsto F \star G\right)_{G \in \mathcal{F}_c}
\end{displaymath}
is initial.
\end{thm}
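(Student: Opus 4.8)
The source in question is precisely the claim that $\mathcal{T}_w$ coincides with the initial topology $\mathcal{T}_{\mathrm{init}}$ on $\mathcal{F}$ for the family of maps $F\mapsto F\star G$, $G\in\mathcal{F}_c$. Since $\mathcal{T}_w$ is metrizable (say by the L\'evy metric), the plan is to establish the two inclusions $\mathcal{T}_{\mathrm{init}}\subseteq\mathcal{T}_w$ and $\mathcal{T}_w\subseteq\mathcal{T}_{\mathrm{init}}$ by sequential arguments, the second one routed through an auxiliary metrizable topology.

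For $\mathcal{T}_{\mathrm{init}}\subseteq\mathcal{T}_w$ it suffices to show that each $F\mapsto F\star G$ is continuous from $(\mathcal{F},\mathcal{T}_w)$ to $(\mathcal{F}_c,\mathcal{T}_{D_u})$. Convolution is weakly continuous: for $h\in\mathcal{C}_b(\mathbb{R},\mathbb{R})$ one has $\int h\,d(F\star G)=\int\tilde h\,dF$ with $\tilde h(s)=\int h(s+t)\,dG(t)\in\mathcal{C}_b(\mathbb{R},\mathbb{R})$ by boundedness and dominated convergence, so $F_n\to F$ in $\mathcal{T}_w$ forces $F_n\star G\to F\star G$ in $\mathcal{T}_w$. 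As $G\in\mathcal{F}_c$ and $\star$ is commutative, the limit $F\star G$ lies in $\mathcal{F}_c$, and P\'olya's theorem (weak convergence to a continuous cdf is automatically uniform) upgrades this to $D_u(F_n\star G,F\star G)\to0$. Metrizability of $\mathcal{T}_w$ then turns sequential continuity into continuity.

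For the reverse inclusion I would use only the countably many Gaussian cdf's $G_m$, namely the $N(0,1/m)$ distribution functions, all of which belong to $\mathcal{F}_c$. Let $\mathcal{T}'$ be the initial topology on $\mathcal{F}$ for the subsource $(F\mapsto F\star G_m)_{m\in\mathbb{N}}$, so that $\mathcal{T}'\subseteq\mathcal{T}_{\mathrm{init}}\subseteq\mathcal{T}_w$; being the initial topology of countably many maps into metric spaces, $\mathcal{T}'$ is metrizable once it separates points, which it does since $F\star G_m\to F$ weakly as $m\to\infty$. It therefore suffices to prove that $D_u(F_n\star G_m,F\star G_m)\to0$ for every $m$ implies $F_n\to F$ in $\mathcal{T}_w$: by metrizability of $\mathcal{T}'$ this sequential implication yields $\mathcal{T}_w\subseteq\mathcal{T}'$, and the chain of inclusions collapses to $\mathcal{T}_w=\mathcal{T}'=\mathcal{T}_{\mathrm{init}}$, i.e.\ initiality. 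To prove the implication, fix a bounded $L$-Lipschitz map $h$ and, with $X_n\sim F_n$, $X\sim F$ and $Z\sim N(0,1)$ independent, estimate
\[
\Bigl|\!\int h\,dF_n-\!\int h\,dF\Bigr|\le\bigl|\mathbb{E}h(X_n)-\mathbb{E}h(X_n+Z/\sqrt m)\bigr|+\Bigl|\!\int h\,d(F_n\star G_m)-\!\int h\,d(F\star G_m)\Bigr|+\bigl|\mathbb{E}h(X+Z/\sqrt m)-\mathbb{E}h(X)\bigr|.
\]
The first and third terms are at most $L\,\mathbb{E}|Z|/\sqrt m$, uniformly in $n$; the middle term tends to $0$ as $n\to\infty$ for each fixed $m$, since $D_u$-convergence implies weak convergence and $h\in\mathcal{C}_b(\mathbb{R},\mathbb{R})$. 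Hence $\limsup_n|\int h\,dF_n-\int h\,dF|\le 2L\,\mathbb{E}|Z|/\sqrt m$ for every $m$, so it is $0$; as bounded Lipschitz maps are convergence-determining for weak convergence of probability measures on $\mathbb{R}$, this gives $F_n\to F$ in $\mathcal{T}_w$.

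The crux, and the only step that is not bookkeeping, is this last implication: one must reconstruct a strong enough trace of $F$ from the smoothed cdf's $F\star G_m$, the key point being that convolution with a narrow Gaussian perturbs the integral of a Lipschitz test function by an amount ($L\,\mathbb{E}|Z|/\sqrt m$) controlled uniformly over all cdf's, which is exactly what lets $D_u$-convergence of the smoothed sequences be promoted to weak convergence of the originals. A more probabilistic alternative replaces this estimate by a tightness argument: $D_u$-convergence of $F_n\star G_m$ forces $\{F_n\}$ to be tight, so every weak subsequential limit $H$ satisfies $H\star G_m=F\star G_m$ for all $m$, whence $H=F$ upon letting $m\to\infty$; but that route additionally invokes Helly's selection theorem, so I would favour the Lipschitz estimate above.
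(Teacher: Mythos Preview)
The paper does not prove this theorem at all: it is quoted as a classical result of Bergstr{\"o}m and referenced to \cite{Berg}. Your argument is correct and self-contained, so there is nothing in the paper to compare it against line by line.

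A couple of remarks on the execution. In the first inclusion your use of P\'olya's theorem is exactly the right upgrade from weak to uniform convergence, and the computation $\int h\,d(F\star G)=\int\tilde h\,dF$ with $\tilde h\in\mathcal{C}_b(\mathbb{R},\mathbb{R})$ is the standard way to see weak continuity of convolution. For the second inclusion, the uniform-in-$n$ bound $\bigl|\mathbb{E}h(X_n)-\mathbb{E}h(X_n+Z/\sqrt m)\bigr|\le L\,\mathbb{E}|Z|/\sqrt m$ is precisely the point that makes the argument work, and your observation that pseudo-metrizability of $\mathcal{T}'$ (from a countable subsource into metric targets) suffices for the sequential-to-continuous passage is correct; the separation-of-points remark is true but not strictly needed. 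The alternative tightness/Helly route you sketch at the end is also valid and is closer in spirit to how such results are often presented in probability texts, but your Lipschitz estimate is cleaner and avoids the extra appeal to relative compactness.
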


In order to lay the structural foundations of the quantitative central limit theory developed in \cite{BLV13}, approach theory was invoked. More precisely, the following definition was proposed in \cite{BLV13} (appendix B). It is clearly inspired by Theorem \ref{thm:Berg}. We denote the underlying approach structure of $D_u$ as $\mathcal{A}_{D_u}$.

\begin{de}\label{de:ContApp}
The {\em continuity approach structure} $\mathcal{A}_c$ on $\mathcal{F}$ is the initial approach structure for the source
\begin{displaymath}
\left(\mathcal{F} \rightarrow \left(\mathcal{F}_c,\mathcal{A}_{D_u}\right) : F \mapsto F \star G\right)_{G \in \mathcal{F}_c}.
\end{displaymath}
\end{de}

Define for each cdf $F$ and each $\alpha \in \mathbb{R}^+_0$ the mapping
\begin{displaymath}
\phi_{F,\alpha} : \mathcal{F} \rightarrow \left[0,1\right] \index{$\phi_{F,\alpha}$}
\end{displaymath}
by putting
\begin{displaymath}
\phi_{F,\alpha}(G) = \sup_{x \in \mathbb{R}} \max\{F(x - \alpha) - G(x),G(x) - F(x + \alpha)\}.
\end{displaymath}
Furthermore, for $F \in \mathcal{F}$, let $\Phi(F)$ be the set of all maps $\phi_{F,\alpha}$, where $\alpha$ runs through $\mathbb{R}^+_0$.

The proofs of the following results can be found in \cite{LoIn}. 

\begin{thm}\label{thm:BasisCApp}
The collection of sets $\left(\Phi(F)\right)_{F \in \mathcal{F}}$ is a basis for the approach system of $\mathcal{A}_c$. 
\end{thm}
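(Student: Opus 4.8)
The plan is to compare $\Phi(F)$ with the canonical basis for the initial approach system built into Definition~\ref{de:ContApp}. Recall that for an initial approach structure relative to a source $(f_i : X \to (X_i,\mathcal{A}^i))_{i}$, a basis for the approach system at a point $x$ is given by the functions $\sup_{i \in J}\varphi_i \circ f_i$, where $J$ ranges over the finite subsets of the index set and $\varphi_i$ over a basis of $\mathcal{A}^i_{f_i(x)}$. Since $D_u$ is a metric, the approach system of $\mathcal{A}_{D_u}$ at a point $H \in \mathcal{F}_c$ has the singleton basis $\{D_u(H,\cdot)\}$. Hence, writing $\psi^F_{H_1,\dots,H_n}(G) = \max_{1 \le j \le n} D_u(F \star H_j, G \star H_j)$, the collection $\{\psi^F_{H_1,\dots,H_n} \mid n \ge 1,\ H_1,\dots,H_n \in \mathcal{F}_c\}$ is a basis for the approach system of $\mathcal{A}_c$ at $F$. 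It therefore suffices to prove that at each $F$ the collections $\Phi(F)$ and $\{\psi^F_{H_1,\dots,H_n}\}$ are equivalent: every $\phi_{F,\alpha}$ lies pointwise below some $\psi^F_{H_1,\dots,H_n}$, and every $\psi^F_{H_1,\dots,H_n}$ lies below some $\phi_{F,\alpha}$ up to an arbitrarily small additive constant. All maps involved are $[0,1]$-valued, so no truncation level $\omega$ has to be carried along.

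For the first comparison, fix $\alpha > 0$ and let $H_\alpha^{-}$ and $H_\alpha^{+}$ be the uniform distributions on $[-\alpha,0]$ and $[0,\alpha]$, both of which lie in $\mathcal{F}_c$. For any cdf $K$ one computes $(K \star H_\alpha^{-})(x) = \frac{1}{\alpha}\int_x^{x+\alpha} K(t)\,dt$ and $(K \star H_\alpha^{+})(x) = \frac{1}{\alpha}\int_{x-\alpha}^x K(t)\,dt$, so monotonicity of $K$ gives $K(x) \le (K \star H_\alpha^{-})(x) \le K(x+\alpha)$ and $K(x-\alpha) \le (K \star H_\alpha^{+})(x) \le K(x)$. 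Applying this to $F$ and to $G$ and subtracting yields, for every $x$, the pointwise bounds $G(x) - F(x+\alpha) \le (G \star H_\alpha^{-})(x) - (F \star H_\alpha^{-})(x)$ and $F(x-\alpha) - G(x) \le (F \star H_\alpha^{+})(x) - (G \star H_\alpha^{+})(x)$. Taking maxima and then suprema over $x$ gives $\phi_{F,\alpha} \le \psi^F_{H_\alpha^{+},H_\alpha^{-}}$; since approach systems are ideals, this already shows $\phi_{F,\alpha} \in (\mathcal{A}_c)_F$.

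For the reverse comparison it is enough, by passing to the smallest of finitely many radii, to deal with a single $H \in \mathcal{F}_c$. Suppose $\phi_{F,\alpha}(G) = t$; then $F(z - \alpha) - t \le G(z) \le F(z+\alpha) + t$ for every $z$, so monotonicity of $F$ gives $|F(z) - G(z)| \le \big(F(z+\alpha) - F(z-\alpha)\big) + t$. Integrating this bound in the variable $z = x-y$ against $H$ produces, for every $x$,
\begin{displaymath}
|(F \star H)(x) - (G \star H)(x)| \le (F \star H)(x+\alpha) - (F \star H)(x-\alpha) + \phi_{F,\alpha}(G).
\end{displaymath}
Now $F \star H \in \mathcal{F}_c$, and a continuous cdf is uniformly continuous on $\mathbb{R}$, so $\sup_x\big((F \star H)(x+\alpha) - (F \star H)(x-\alpha)\big) \to 0$ as $\alpha \to 0^{+}$. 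Given $\epsilon > 0$, choosing $\alpha$ so that this supremum is $< \epsilon$ yields $D_u(F \star H, G \star H) \le \phi_{F,\alpha}(G) + \epsilon$ for all $G$. Combining the two comparisons with the fact that $\{\psi^F_{H_1,\dots,H_n}\}$ is a basis for $(\mathcal{A}_c)_F$, we conclude that $\Phi(F)$ is a basis for $(\mathcal{A}_c)_F$, and doing this at every $F$ proves the theorem.

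The only routine ingredients are the integral formulas for convolving with a uniform law and the reduction from a finite family to a single radius. The two steps that actually carry the argument are the choice of the one-sided uniform smoothings $H_\alpha^{\pm}$ — which is precisely what turns the two shifted pointwise inequalities defining $\phi_{F,\alpha}$ into honest $D_u$-estimates with \emph{no} error term, using nothing but monotonicity of cumulative distribution functions — and the upgrade from continuity of $F \star H$ to uniform continuity, which is what makes the reverse estimate uniform in $x$; this last point is essential, since $D_u$ is itself a supremum over $x$. I expect the uniform-continuity step, and pinning down the dependence of $\alpha$ on $\epsilon$ there, to be the main obstacle.
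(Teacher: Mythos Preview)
Your argument is correct. The paper itself does not prove this statement: it is one of the results that are simply quoted, with the line ``The proofs of the following results can be found in \cite{LoIn}.'' So there is no in-paper proof to compare against, and what you have written is a self-contained substitute for that external reference.

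A few remarks on the details, all confirming your approach. Your first comparison is clean: with $H_\alpha^{\pm}$ the one-sided uniform laws, the sandwich $K(x-\alpha)\le (K\star H_\alpha^{+})(x)\le K(x)\le (K\star H_\alpha^{-})(x)\le K(x+\alpha)$ indeed gives $\phi_{F,\alpha}\le \psi^{F}_{H_\alpha^{+},H_\alpha^{-}}$ with no error term, exactly as you say. For the reverse direction, the reduction from $\psi^{F}_{H_1,\dots,H_n}$ to a single $H$ is justified because $\alpha\mapsto\phi_{F,\alpha}$ is pointwise decreasing (monotonicity of $F$), so for finitely many $H_j$ one may take the smallest of the corresponding $\alpha_j$. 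The uniform-continuity step is genuine but standard: a continuous cdf, being monotone with limits $0$ and $1$, is automatically uniformly continuous on $\mathbb{R}$, so $\sup_x\bigl((F\star H)(x+\alpha)-(F\star H)(x-\alpha)\bigr)\to 0$. Finally, $\Phi(F)$ is a chain under the pointwise order (again by monotonicity in $\alpha$), hence an ideal basis, and every $\phi_{F,\alpha}$ vanishes at $F$; together with your two comparisons this is exactly what is needed for $(\Phi(F))_{F\in\mathcal{F}}$ to be a basis for $\mathcal{A}_c$.
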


\begin{thm}\label{thm:RelApp}
The topological coreflection of $\mathcal{A}_c$ is $\mathcal{T}_w$. The metric coreflection of $\mathcal{A}_c$ is $D_u$.
\end{thm}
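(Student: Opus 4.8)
The plan is to read off both coreflections from the explicit basis $\bigl(\Phi(F)\bigr)_{F \in \mathcal F}$ supplied by Theorem~\ref{thm:BasisCApp}, rather than to try to commute coreflections past the initial structure of Definition~\ref{de:ContApp} (which one should not expect to work, the index set $\mathcal F_c$ being infinite).

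\emph{Metric coreflection.} The distance of the (quasi-)metric coreflection of an approach space with approach system $\mathcal A$ is $d(x,y) = \sup_{\phi \in \mathcal A_x}\phi(y)$, and this supremum may equivalently be taken over any basis; hence $d(F,G) = \sup_{\alpha > 0}\phi_{F,\alpha}(G)$. Since $\phi_{F,\alpha}$ is non-increasing in $\alpha$, I would interchange the two suprema and let $\alpha \downarrow 0$, using right-continuity of $F$ (so $F(x+\alpha) \downarrow F(x)$) and the existence of left limits (so $F(x-\alpha) \uparrow F(x^-)$), to obtain $d(F,G) = \sup_x \max\{F(x^-) - G(x),\, G(x) - F(x)\}$. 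A short lemma — given $x_0$ and $\eta > 0$, choose $y > x_0$ so close to $x_0$ that $F(y^-) \ge F(x_0)$ and, by right-continuity of $G$, $|G(y) - G(x_0)| < \eta$ — shows $\sup_x\bigl(F(x^-) - G(x)\bigr) = \sup_x\bigl(F(x) - G(x)\bigr)$, and therefore $d(F,G) = \sup_x |F(x) - G(x)| = D_u(F,G)$. In particular $d$ is symmetric, so it is irrelevant here whether one works in the quasi-metric or the metric coreflection.

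\emph{Topological coreflection.} In any approach space the balls $B_\phi(x,\epsilon)$ ($\phi \in \mathcal A_x$, $\epsilon > 0$) form a neighbourhood base for the topological coreflection, so by Theorem~\ref{thm:BasisCApp} the sets $B_{\phi_{F,\alpha}}(F,\epsilon) = \{G \in \mathcal F : F(x-\alpha) - \epsilon < G(x) < F(x+\alpha) + \epsilon \text{ for all } x\}$, $\alpha,\epsilon > 0$, form a neighbourhood base at $F$ for the topological coreflection $\mathcal T$ of $\mathcal A_c$. As two topologies agree once their neighbourhood filters agree at every point, it suffices to show that this family and a neighbourhood base for $\mathcal T_w$ are cofinal in each other. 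Here I would invoke the classical fact that $\mathcal T_w$ on $\mathcal F$ is metrized by the L\'evy distance $L$, and then check the two inclusions: for every $r > 0$, $B_{\phi_{F,r}}(F,r)$ is contained in the \emph{closed} L\'evy ball of radius $r$, hence in the open L\'evy ball $B_L(F,2r)$; and conversely, for every $\alpha,\epsilon > 0$ and every $r < \min(\alpha,\epsilon)$ one has $B_L(F,r) \subseteq B_{\phi_{F,\alpha}}(F,\epsilon)$. Both are straightforward manipulations of the defining inequalities of $\phi_{F,\alpha}$ and of $L$, using only monotonicity of $F$. It follows that the two families generate the same filter at each $F$, so $\mathcal T = \mathcal T_w$.

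The $\epsilon$-bookkeeping in the two ball comparisons and the limit computation are routine. The genuinely delicate point is the small lemma handling the left limits $F(x^-)$, i.e.\ correctly absorbing the jumps of $F$, in the metric computation; and, on the topological side, the appeal to the L\'evy metrization of $\mathcal T_w$ is the one nontrivial external input — if a self-contained argument is preferred, one can instead observe that $\{\phi_{F,1/n}\}_n$ is a countable basis at $F$, so that $\mathcal T$ is first countable, and then match the convergent sequences of $\mathcal T$ and of $\mathcal T_w$ using the classical characterization of weak convergence of cdf's by pointwise convergence at continuity points.
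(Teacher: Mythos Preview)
The paper does not actually prove this theorem: it states that the proofs ``can be found in \cite{LoIn}'' and moves on. So there is no in-paper argument to compare against, and your proposal stands on its own. Both halves of your argument are correct. The metric-coreflection computation is clean; the small lemma absorbing the left limits $F(x^-)$ via right-continuity of $G$ is exactly the point that needs care, and your argument for it is sound. For the topological coreflection, your cofinality argument between the $\phi_{F,\alpha}$-balls and the L\'evy balls is correct and, in fact, overlaps substantially with what the paper does later in Theorem~\ref{pro:ContAppUnderLevMet}, where it shows that the family $\{L_\gamma(F,\cdot)\}_\gamma$ is itself a basis for $\mathcal A_c$; your two ball comparisons are essentially the $\gamma=1$ instance of that result combined with the classical L\'evy metrization of $\mathcal T_w$. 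The alternative route you sketch at the end---matching convergent sequences via the pointwise-at-continuity-points characterization of weak convergence---would also go through and is closer in spirit to the $\Psi(F)$-basis of Theorem~\ref{thm:AltBaseCApp}.
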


\begin{thm}\label{thm:sc}
The space $({\mathcal{F},\mathcal{A}_c})$ is locally countably generated and sequentially complete.
\end{thm}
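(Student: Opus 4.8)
The plan is to treat the two assertions separately; local countable generation is immediate from Theorem~\ref{thm:BasisCApp}, while sequential completeness is where the work lies. For local countable generation, observe that $\alpha \mapsto \phi_{F,\alpha}$ is non-increasing: if $0 < \beta \le \alpha$, then $F(x-\alpha) \le F(x-\beta)$ and $F(x+\alpha) \ge F(x+\beta)$ for every $x$, whence $\phi_{F,\alpha} \le \phi_{F,\beta}$ pointwise on $\mathcal{F}$. Thus each $\phi_{F,\alpha} \in \Phi(F)$ is dominated by $\phi_{F,1/n}$ as soon as $1/n \le \alpha$, so the countable subcollection $\{\phi_{F,1/n} \mid n \ge 1\}$ of $\Phi(F)$ is still a basis for the approach system of $\mathcal{A}_c$ at $F$; hence $(\mathcal{F},\mathcal{A}_c)$ is locally countably generated.

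For sequential completeness, I would first recall that the limit operator of $\mathcal{A}_c$ can be computed from the basis of Theorem~\ref{thm:BasisCApp}: for a sequence $(F_n)_n$ in $\mathcal{F}$ and $G \in \mathcal{F}$,
\begin{displaymath}
\lambda(F_n \rightarrow G) = \sup_{\alpha > 0} \limsup_n \phi_{G,\alpha}(F_n).
\end{displaymath}
The key comparison is with the L\'evy distance $d_L(F,G) = \inf\{h > 0 \mid G(x-h) - h \le F(x) \le G(x+h) + h \text{ for all } x\}$, a metric on $\mathcal{F}$ inducing the weak topology $\mathcal{T}_w$. If $\gamma > \lambda(F_n \to G)$, then taking $\alpha = \gamma$ in the supremum forces $\phi_{G,\gamma}(F_n) < \gamma$ for all large $n$, i.e. $G(x-\gamma) - \gamma < F_n(x) < G(x+\gamma) + \gamma$ for all $x$, i.e. $d_L(F_n,G) \le \gamma$; letting $\gamma \downarrow \lambda(F_n \to G)$ gives $\limsup_n d_L(F_n, G) \le \lambda(F_n \to G)$.

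Now assume $\inf_{G \in \mathcal{F}} \lambda(F_n \to G) = 0$ and pick, for each $k \ge 1$, a cdf $G_k$ with $\lambda(F_n \to G_k) < 1/k$, so $d_L(F_n, G_k) < 1/k$ for all $n \ge N_k$. Two consequences: by the triangle inequality $d_L(F_n, F_m) < 2/k$ for $m,n \ge N_k$, so $(F_n)_n$ is $d_L$-Cauchy; and the bound $F_n(x) < G_k(x + 1/k) + 1/k$ for $n \ge N_k$, together with $\lim_{x \to -\infty} G_k(x) = 0$ and the individual tightness of $F_1,\dots,F_{N_k-1}$, makes $\sup_n F_n(x)$ arbitrarily small for $x$ sufficiently negative, while symmetrically $F_n(x) > G_k(x - 1/k) - 1/k$ and $\lim_{x \to \infty} G_k(x) = 1$ make $\inf_n F_n(x)$ close to $1$ for $x$ large; hence $(F_n)_n$ is tight. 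By Helly's selection theorem some subsequence converges weakly to an $F_0 \in \mathcal{F}$; since $d_L$ metrizes $\mathcal{T}_w$, this subsequence converges to $F_0$ in $d_L$, and a $d_L$-Cauchy sequence with a $d_L$-convergent subsequence is $d_L$-convergent, so $F_n \to F_0$ in $\mathcal{T}_w$. By Theorem~\ref{thm:RelApp}, $\mathcal{T}_w$ is the topological coreflection of $\mathcal{A}_c$, so $(\mathcal{F},\mathcal{A}_c)$ is sequentially complete.

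The step I expect to be the main obstacle is the tightness deduction: showing that $\inf_G \lambda(F_n \to G) = 0$ rules out escape of mass to infinity. This is precisely what keeps us inside the space of \emph{probability} cdf's, on which the required completeness holds; on the larger space of sub-probability cdf's a $d_L$-Cauchy sequence may fail to converge (mass can leak away), so the uniform control of the tails furnished by tightness is indispensable. The remaining ingredients --- the basis formula for $\lambda$, its comparison with $d_L$, and the Cauchy-plus-subsequence argument --- are routine.
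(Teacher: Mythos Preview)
The paper does not give its own proof of this theorem: immediately before Theorems~\ref{thm:BasisCApp}--\ref{thm:sc} it states that their proofs ``can be found in \cite{LoIn}.'' So there is no in-paper argument to compare yours against.

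Your argument is sound. The local countable generation step is exactly right: monotonicity of $\alpha\mapsto\phi_{F,\alpha}$ lets you thin $\Phi(F)$ to $\{\phi_{F,1/n}\mid n\ge 1\}$. For sequential completeness, the chain
\[
\limsup_n d_L(F_n,G)\;\le\;\lambda(F_n\to G)
\]
is correctly derived from the basis formula $\lambda(F_n\to G)=\sup_{\alpha>0}\limsup_n\phi_{G,\alpha}(F_n)$, and from $\inf_G\lambda(F_n\to G)=0$ you legitimately extract both $d_L$-Cauchyness and tightness of $(F_n)_n$; Helly together with tightness then produces a genuine probability cdf $F_0$ as subsequential weak limit, and the Cauchy-plus-convergent-subsequence step closes the argument. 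Your identification of the tightness deduction as the crux is accurate: Helly on the line only yields a sub-cdf, and it is precisely the two-sided tail control $G_k(x-1/k)-1/k\le F_n(x)\le G_k(x+1/k)+1/k$ that keeps the limit in $\mathcal{F}$.

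Two cosmetic remarks. First, the strict inequality $F_n(x)<G_k(x+1/k)+1/k$ should be non-strict (from $d_L(F_n,G_k)<1/k$ one gets some $h<1/k$ with $F_n(x)\le G_k(x+h)+h\le G_k(x+1/k)+1/k$); this does not affect the argument. Second, once you have shown $(F_n)_n$ is $d_L$-Cauchy you could alternatively invoke the standard fact that $(\mathcal{F},d_L)$ is a complete metric space and bypass Helly altogether; your route via tightness is, however, self-contained and in the spirit of the paper's later use of Helly in Theorem~\ref{Prokhorov}.
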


It is the aim of this section to express the relative sequential compactness index of a set $\mathcal{D}$ in the space $(\mathcal{F},\mathcal{A}_c)$ in terms of a canonical index measuring up to what extent $\mathcal{D}$ is tight (\cite{Kal}). We thus obtain a strong quantitative generalization of Prokhorov's Theorem (Theorem \ref{Prokhorov}). To this end, we make use of the compactness result obtained in the previous section. First some preparation is required.

Define, for $\gamma \in \mathbb{R}^+_0$, the metric $L_\gamma(F,G)$ between $F$ and $G$ in $\mathcal{F}$ as the infimum of all $\alpha \in \mathbb{R}^+_0$ for which the inequalities 
\begin{displaymath}
F(x - \gamma \alpha ) -  \alpha \leq G(x) \leq F(x + \gamma \alpha) + \alpha
\end{displaymath}
hold for all points $x \in \mathbb{R}$. The metric $L_\gamma$ is known as the L{\'e}vy metric with parameter $\gamma$ (\cite{Kal}).

\begin{thm}\label{pro:ContAppUnderLevMet}
The assignment of collections
\begin{equation}
\left(\left\{ L_{\gamma}(F,\cdot) \mid \gamma \in \mathbb{R}^+_0\right\}\right)_{F \in \mathcal{F}}\label{LevyBase}
\end{equation}
is a basis for the approach system of $\mathcal{A}_c$.
\end{thm}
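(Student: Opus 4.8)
The plan is to exploit Theorem~\ref{thm:BasisCApp}, which asserts that $\left(\Phi(F)\right)_{F \in \mathcal{F}}$ is a basis for the approach system of $\mathcal{A}_c$, and to show that the collection \eqref{LevyBase} generates the same approach system by comparing, for fixed $F$, the functions $L_\gamma(F,\cdot)$ with the functions $\phi_{F,\alpha}$ in both directions. First I would record two elementary facts: that $\phi_{F,\alpha}(G) \leq \beta$ holds if and only if $F(x-\alpha) - \beta \leq G(x) \leq F(x+\alpha) + \beta$ for all $x \in \mathbb{R}$; and that the set of $r$ for which $F(x - \gamma r) - r \leq G(x) \leq F(x + \gamma r) + r$ holds for all $x$ is an up-set in $[0,\infty[$, because $F$ is non-decreasing, so that every $r$ strictly larger than $L_\gamma(F,G)$ belongs to it. I would also note that $\phi_{F,\alpha}$ and $L_\gamma(F,\cdot)$ are valued in $[0,1]$, the latter since $r = 1$ always satisfies the defining inequalities.

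Next I would prove the first comparison: for all $\gamma, \alpha \in \mathbb{R}^+_0$ and all $G \in \mathcal{F}$,
\begin{displaymath}
L_\gamma(F,G) \leq \max\left\{\frac{\alpha}{\gamma},\, \phi_{F,\alpha}(G)\right\}.
\end{displaymath}
This follows by verifying that $t = \max\{\alpha/\gamma, \phi_{F,\alpha}(G)\}$ satisfies the inequalities defining $L_\gamma(F,G)$: since $\gamma t \geq \alpha$, monotonicity of $F$ gives $F(x-\gamma t) \leq F(x-\alpha)$ and $F(x+\gamma t) \geq F(x+\alpha)$, and then $t \geq \phi_{F,\alpha}(G)$ together with the first elementary fact does the rest. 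Taking $\alpha = \gamma \epsilon$ and using $\max\{a,b\} \leq a+b$ for $a, b \geq 0$ yields $L_\gamma(F,\cdot) \leq \phi_{F,\gamma\epsilon} + \epsilon$ pointwise for every $\epsilon > 0$. Since $\phi_{F,\gamma\epsilon} \in \Phi(F)$ and an approach system consists precisely of the functions dominated, after truncation by an arbitrary $N < \infty$ and up to an arbitrary $\epsilon > 0$, by members of one of its bases, this shows both that each $L_\gamma(F,\cdot)$ belongs to the approach system of $\mathcal{A}_c$ at $F$ and that every member of \eqref{LevyBase} is suitably dominated by a member of $\left(\Phi(F)\right)_F$.

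The second comparison is where I expect the main difficulty: for all $\alpha \in \mathbb{R}^+_0$, all $\gamma \in \mathbb{R}^+_0$ with $\gamma < \alpha$, and all $G \in \mathcal{F}$,
\begin{displaymath}
\phi_{F,\alpha}(G) \leq L_\gamma(F,G).
\end{displaymath}
The idea is that if $r$ satisfies $F(x - \gamma r) - r \leq G(x) \leq F(x + \gamma r) + r$ for all $x$ and in addition $\gamma r \leq \alpha$, then $F(x-\gamma r) \geq F(x-\alpha)$ and $F(x+\gamma r) \leq F(x+\alpha)$ upgrade these to $F(x-\alpha) - r \leq G(x) \leq F(x+\alpha) + r$, whence $\phi_{F,\alpha}(G) \leq r$; and since $L_\gamma(F,G) \leq 1 < \alpha/\gamma$ one can let $r$ decrease to $L_\gamma(F,G)$ while preserving $\gamma r \leq \alpha$, taking $r$ in the non-empty interval $\left]L_\gamma(F,G), \alpha/\gamma\right[$. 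The subtle point is precisely that the uniform bound $1$ on cumulative distribution functions is what allows the diagonal parameter $r$ of the L\'evy metric to absorb the fixed horizontal shift $\alpha$; this is specific to $\mathcal{F}$ and not a formal manipulation.

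Finally I would assemble the two comparisons. Given $F$, a function $\psi$ in the approach system of $\mathcal{A}_c$ at $F$, $N < \infty$ and $\epsilon > 0$, Theorem~\ref{thm:BasisCApp} provides $\alpha \in \mathbb{R}^+_0$ with $\psi \wedge N \leq \phi_{F,\alpha} + \epsilon$; choosing $\gamma = \alpha/2$ and invoking the second comparison gives $\psi \wedge N \leq L_{\alpha/2}(F,\cdot) + \epsilon$. Together with the membership $L_\gamma(F,\cdot) \in (\mathcal{A}_c)_F$ established above, this is exactly the statement that \eqref{LevyBase} is a basis for the approach system of $\mathcal{A}_c$.
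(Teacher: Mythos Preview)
Your argument is correct. Both you and the paper rely on Theorem~\ref{thm:BasisCApp} and on the same underlying comparison between $\phi_{F,\alpha}$ and $L_\gamma(F,\cdot)$, but the packaging differs. The paper first notes that $\gamma \mapsto L_\gamma$ is antitone, so the L\'evy collection is a basis for \emph{some} approach structure $\mathcal{A}$, and then proves $\delta_{\mathcal{A}}(F,\mathcal{D}) = \delta_{\mathcal{A}_c}(F,\mathcal{D})$ by the two implications $\phi_{F,\gamma\theta}(G) < \theta \Rightarrow L_\gamma(F,G) \leq \theta$ and $\phi_{F,\alpha}(G) > \theta \Rightarrow L_{\alpha/\theta}(F,G) \geq \theta$. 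You instead establish pointwise inequalities between the basis functions themselves: $L_\gamma(F,\cdot) \leq \phi_{F,\gamma\epsilon} + \epsilon$ and, for $\gamma < \alpha$, $\phi_{F,\alpha} \leq L_\gamma(F,\cdot)$. Your second inequality hinges on the uniform bound $L_\gamma \leq 1$, which is exactly what lets a fixed $\gamma$ absorb the horizontal shift $\alpha$; in the paper this role is played by the $\theta$-dependent choice $\gamma = \alpha/\theta$. Your route is marginally more direct in that it bypasses the detour through distances and the separate verification that the L\'evy collection satisfies the basis axioms, while the paper's distance argument is shorter to write down once one accepts that equality of $\delta$ suffices. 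Substantively the two proofs are equivalent reformulations of the same estimate.
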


\begin{proof}
It is easily seen that $L_{\gamma_1} \leq L_{\gamma_2}$ whenever $\gamma_2 \leq \gamma_1$, whence (\ref{LevyBase}) is a basis for an approach structure which we denote $\mathcal{A}$. Now it is enough to prove that, for all $F \in \mathcal{F}$ and $\mathcal{D} \subset \mathcal{F}$ nonempty, $\delta_{\mathcal{A}}(F,\mathcal{D}) = \delta_{\mathcal{A}_c}(F,\mathcal{D})$. We will do this in two steps, making use of Theorem \ref{thm:BasisCApp}.

1) $\delta_{\mathcal{A}}(F,\mathcal{D}) \leq \delta_{\mathcal{A}_c}(F,\mathcal{D})$: If $\delta_{\mathcal{A}_c}(F,\mathcal{D}) < \theta$ with $\theta > 0$, then for $\gamma > 0$ there exists $G \in \mathcal{D}$ for which $\phi_{F,\gamma \theta}(G) < \theta.$ But then we have for all real numbers $x$ that $F(x - \gamma \theta) - G(x) < \theta$ and $G(x) - F(x + \gamma \theta) < \theta$, from which we deduce that $L_\gamma(F,G) < \theta$ and hence $\delta_{\mathcal{A}}(F,\mathcal{D})\leq \theta$, which proves the desired inequality.

2) $\delta_{\mathcal{A}_c}(F,\mathcal{D}) \leq \delta_{\mathcal{A}}(F,\mathcal{D})$: If $\delta_{\mathcal{A}_c}(F,\mathcal{D}) > \theta$ with $\theta > 0$, then there exists $\alpha > 0$ such that for all $G \in \mathcal{D}$ we have $\phi_{F,\alpha}(G) > \theta$. If we put $\gamma = \alpha \theta^{-1}$, then it follows that for every $G \in \mathcal{D}$ there exists $x \in \mathbb{R}$ such that $F(x - \gamma \theta) - G(x) > \theta$ or $G(x) - F(x + \gamma \theta) > \theta$. We conclude that $L_{\gamma}(F,G) \geq \theta$ and hence $\delta_{\mathcal{A}}(F,\mathcal{D}) \geq \theta$, which proves the desired inequality.
\end{proof}

\paragraph{} We call a finite set of points at which $F$ is continuous an \textit{$F$-net}\index{$F$-net} and we introduce for each $F$-net $\mathcal{N}$ the mapping 
\begin{displaymath}
\psi_{F,\mathcal{N}} : \mathcal{F} \rightarrow \left[0,1\right]\index{$\psi_{F,\mathcal{N}}$} 
\end{displaymath}
by setting
\begin{displaymath}
\psi_{F,\mathcal{N}}(G) = \sup_{x \in N} \left|F(x) - G(x)\right|.
\end{displaymath}

\begin{lem}\label{BasesCApp}
For every $F \in \mathcal{F}$ the following hold.

1) For an $F$-net $\mathcal{N}$ and $\epsilon > 0$ there exists $\alpha  \in \mathbb{R}^+_0$ so that 
\begin{displaymath}
\psi_{F,\mathcal{N}}(G) \leq \phi_{F,\alpha}(G) + \epsilon
\end{displaymath}
for each $G \in \mathcal{F}$.

2) For $\alpha \in \mathbb{R}^+_0$ and $\epsilon > 0$ there exists an $F$-net $\mathcal{N}$ so that 
\begin{displaymath}
\phi_{F,\alpha}(G) \leq \psi_{F,\mathcal{N}}(G) + \epsilon
\end{displaymath}
for each $G \in \mathcal{F}$.

\end{lem}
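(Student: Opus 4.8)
The plan is to prove both inequalities by direct estimates, using only the monotonicity of cumulative distribution functions, the continuity of $F$ at the net points, and the limiting behaviour $\lim_{x\to-\infty}F(x)=0$, $\lim_{x\to\infty}F(x)=1$.

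For 1), fix an $F$-net $\mathcal{N} = \{x_1, \dots, x_k\}$ and $\epsilon > 0$. Since every cdf is right-continuous and $F$ is in addition left-continuous at each $x_i$, the increment $F(x_i + \alpha) - F(x_i - \alpha)$ tends to $0$ as $\alpha \downarrow 0$; taking a minimum over the finitely many indices I can therefore pick one $\alpha \in \mathbb{R}^+_0$ with $F(x_i + \alpha) - F(x_i - \alpha) < \epsilon$ for every $i$. For an arbitrary $G \in \mathcal{F}$ and each $i$, the two inequalities $F(x_i - \alpha) - G(x_i) \leq \phi_{F,\alpha}(G)$ and $G(x_i) - F(x_i + \alpha) \leq \phi_{F,\alpha}(G)$, both valid by the definition of $\phi_{F,\alpha}$, combine with $F(x_i - \alpha) \leq F(x_i) \leq F(x_i + \alpha)$ and the choice of $\alpha$ to give $F(x_i) - G(x_i) \leq \phi_{F,\alpha}(G) + \epsilon$ and $G(x_i) - F(x_i) \leq \phi_{F,\alpha}(G) + \epsilon$, hence $|F(x_i) - G(x_i)| \leq \phi_{F,\alpha}(G) + \epsilon$. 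Taking the supremum over $i$ proves the claim.

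For 2), fix $\alpha \in \mathbb{R}^+_0$ and $\epsilon > 0$. Using that a monotone function has at most countably many discontinuities, so that the continuity points of $F$ are dense, together with the limit behaviour of $F$ at $\pm\infty$, I would choose continuity points $t_0 < t_1 < \dots < t_n$ of $F$ with $F(t_0) < \epsilon$, $1 - F(t_n) < \epsilon$, and $t_{j+1} - t_j < \alpha$ for every $j$, and put $\mathcal{N} = \{t_0, \dots, t_n\}$. For arbitrary $G \in \mathcal{F}$ and arbitrary $x \in \mathbb{R}$ I would bound $F(x - \alpha) - G(x)$ and $G(x) - F(x + \alpha)$ separately by $\psi_{F,\mathcal{N}}(G) + \epsilon$. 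If the interval $[x - \alpha, x]$ contains some $t_j$, then monotonicity of $F$ and $G$ yields $F(x - \alpha) - G(x) \leq F(t_j) - G(t_j) \leq \psi_{F,\mathcal{N}}(G)$; the mesh condition guarantees this is the case unless $x < t_0$ or $x > t_n + \alpha$, and in those two exceptional cases $F(x - \alpha) - G(x)$ is bounded above by $F(t_0) < \epsilon$ respectively by $1 - G(t_n) \leq (1 - F(t_n)) + \psi_{F,\mathcal{N}}(G) < \epsilon + \psi_{F,\mathcal{N}}(G)$. The estimate for $G(x) - F(x + \alpha)$ is entirely symmetric, with the interval $[x, x + \alpha]$ and the roles of the two tails interchanged. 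Taking the maximum of the two bounds and then the supremum over $x$ gives $\phi_{F,\alpha}(G) \leq \psi_{F,\mathcal{N}}(G) + \epsilon$.

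The whole argument is bookkeeping; the step I expect to require the most care is the case distinction in 2), namely checking that the mesh condition $t_{j+1} - t_j < \alpha$ really forces every interval of length $\alpha$ that is not pushed into one of the two tails to contain a net point on the side of $x$ that makes the monotonicity estimate work, and that the tail contributions get absorbed into $\epsilon$ rather than into $\psi_{F,\mathcal{N}}(G)$. I do not foresee any genuine difficulty beyond this.
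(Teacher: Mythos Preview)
Your proposal is correct and follows essentially the same approach as the paper. Part~1) is virtually identical; in part~2) the paper organizes the case distinction by locating $x$ relative to the partition $[x_i,x_{i+1})$ (together with the two tail cases $x<x_0$ and $x\ge x_n$), whereas you organize it by asking whether $[x-\alpha,x]$ (respectively $[x,x+\alpha]$) meets the net, but the resulting monotonicity and tail estimates are the same.
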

\begin{proof}
Let $F \in \mathcal{F}$.

1) Fix an $F$-net $\mathcal{N}$ and $\epsilon > 0$. Since all $x \in \mathcal{N}$ are continuity points of $F$, we may choose $\alpha \in \mathbb{R}^+_0$ such that 
\begin{displaymath}
\forall x \in \mathcal{N}, \forall y \in X : \left|x - y\right| \leq \alpha \Rightarrow \left|F(x) - F(y)\right| \leq \epsilon.
\end{displaymath}
Now, for $G \in \mathcal{F}$ and $x \in N$, we have on the one hand 
\begin{displaymath}
F(x) - G(x) \leq F(x - \alpha) - G(x) + \epsilon \leq \phi_{F,\alpha}(G) + \epsilon,
\end{displaymath}  
and on the other
\begin{displaymath}
G(x) - F(x) \leq G(x) - F(x + \alpha) + \epsilon \leq \phi_{F,\alpha}(G) + \epsilon,
\end{displaymath}
from which it follows that 
\begin{displaymath}
\psi_{F,\mathcal{N}}(G) \leq \phi_{F,\alpha}(G) + \epsilon
\end{displaymath}
and we are done.

2) Fix $\alpha \in \mathbb{R}^+_0$ and $\epsilon > 0$. The number of discontinuities of $F$ being at most countable, it is possible to construct an $F$-net $\mathcal{N}$ consisting of points 
\begin{displaymath}
x_0 < x_1 < \ldots < x_{n-1} < x_n 
\end{displaymath}
such that $F(x_0) \leq \epsilon$, $x_{i+1} - x_i < \alpha$ for all $i \in \{0,\ldots, n-1\}$ and $F(x_n) \geq 1 - \epsilon$. Now fix $G \in \mathcal{F}$ and $x \in \mathbb{R}$. We distinguish between the following cases.\\
If there exists $i \in \{0,\ldots,n-1\}$ such that $x_i \leq x < x_{i+1}$, then 
\begin{displaymath}
F(x-\alpha) - G(x) \leq F(x_i) - G(x_i) \leq \psi_{F,\mathcal{N}}(G) + \epsilon
\end{displaymath} 
and
\begin{displaymath}
G(x) - F(x + \alpha) \leq G(x_{i+1}) - F(x_{i+1}) \leq \psi_{F,\mathcal{N}}(G) + \epsilon. 
\end{displaymath}
If $x < x_0$, then
\begin{displaymath}
F(x - \alpha) - G(x) \leq F(x_0) \leq \epsilon \leq \psi_{F,\mathcal{N}}(G) + \epsilon
\end{displaymath}
and
\begin{displaymath}
G(x) - F(x + \alpha) \leq G(x) - F(x) \leq G(x_0) - (F(x_0) - \epsilon) \leq \psi_{F,\mathcal{N}}(G) + \epsilon.
\end{displaymath}
If $x \geq x_n$, then 
\begin{displaymath}
F(x - \alpha) - G(x) \leq F(x) - G(x) \leq (F(x_n) + \epsilon) - G(x_n) \leq \psi_{F,\mathcal{N}}(G) + \epsilon
\end{displaymath}
and
\begin{displaymath}
G(x) - F(x + \alpha) \leq \epsilon \leq \psi_{F,\mathcal{N}}(G) + \epsilon.
\end{displaymath}
Hence we conclude that 
\begin{displaymath}
\phi_{F,\alpha}(G) \leq \psi_{F,\mathcal{N}}(G) + \epsilon,
\end{displaymath}
which finishes the proof.
\end{proof}

\paragraph{} For $F \in \mathcal{F}$, let $\Psi(F)$ be the set of all maps $\psi_{F,\mathcal{N}}$, with $\mathcal{N}$ running through all $F$-nets.

\begin{thm}\label{thm:AltBaseCApp}
The collection of sets $(\Psi(F))_{F \in \mathcal{F}}$ is a basis for the approach system of $\mathcal{A}_c$.
\end{thm}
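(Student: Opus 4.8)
The plan is to prove that the collection $(\Psi(F))_{F\in\mathcal{F}}$ and the basis $(\Phi(F))_{F\in\mathcal{F}}$ of Theorem~\ref{thm:BasisCApp} generate one and the same approach system --- equivalently, that they are equivalent bases in the sense of \cite{LoIn} --- the bridge between the two being Lemma~\ref{BasesCApp}. Since every $\phi_{F,\alpha}$ and every $\psi_{F,\mathcal{N}}$ takes its values in $[0,1]$, all the truncations that normally intervene in this kind of approach-theoretic bookkeeping (cf.\ the basis and saturation axioms in \cite{LoIn}) are harmless here, and the two parts of Lemma~\ref{BasesCApp} say precisely that each $\psi_{F,\mathcal{N}}$ is $\epsilon$-dominated by some $\phi_{F,\alpha}$ and each $\phi_{F,\alpha}$ is $\epsilon$-dominated by some $\psi_{F,\mathcal{N}}$.

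First I would establish the inclusion $\Psi(F)\subseteq\mathcal{A}_{c,F}$ for every $F$, where $\mathcal{A}_{c,F}$ denotes the approach system of $\mathcal{A}_c$ at $F$. Indeed, given an $F$-net $\mathcal{N}$, Lemma~\ref{BasesCApp}(1) yields for each $\epsilon>0$ some $\alpha$ with $\psi_{F,\mathcal{N}}\le\phi_{F,\alpha}+\epsilon$; as $\phi_{F,\alpha}\in\mathcal{A}_{c,F}$ by Theorem~\ref{thm:BasisCApp} and $\mathcal{A}_{c,F}$ is a saturated ideal, this forces $\psi_{F,\mathcal{N}}\in\mathcal{A}_{c,F}$. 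Hence the approach system generated by $(\Psi(F))_F$ is contained in $\mathcal{A}_c$. For the reverse inclusion I would invoke Lemma~\ref{BasesCApp}(2): for each $\alpha$ and each $\epsilon>0$ there is an $F$-net $\mathcal{N}$ with $\phi_{F,\alpha}\le\psi_{F,\mathcal{N}}+\epsilon$, so every member $\phi_{F,\alpha}$ of the basis of Theorem~\ref{thm:BasisCApp} belongs to the (saturated) approach system generated by $(\Psi(F))_F$; since $\mathcal{A}_c$ is the smallest approach system containing all the $\phi_{F,\alpha}$, the system generated by $(\Psi(F))_F$ contains $\mathcal{A}_c$. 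Combining the two inclusions, $(\Psi(F))_F$ generates precisely $\mathcal{A}_c$, and in particular the saturation of $\Psi(F)$ equals $\mathcal{A}_{c,F}$ for every $F$.

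It then remains to confirm that $(\Psi(F))_F$ is genuinely a basis, i.e.\ obeys the basis axioms of approach theory. Two of them are immediate: $\psi_{F,\mathcal{N}}(F)=0$ for every $F$-net $\mathcal{N}$, and $\psi_{F,\mathcal{N}_1}\vee\psi_{F,\mathcal{N}_2}=\psi_{F,\mathcal{N}_1\cup\mathcal{N}_2}$ with $\mathcal{N}_1\cup\mathcal{N}_2$ again an $F$-net. For the remaining axiom --- the one (cf.\ \cite{LoIn}) relating the values at different points --- I would not attempt to transfer it directly from $\Phi(F)$, since the asymmetry in Lemma~\ref{BasesCApp} (passing from $\psi$ to $\phi$ costs a shift, passing back costs a finer net) makes that route clumsy; instead I would apply the corresponding axiom of the already available approach system $\mathcal{A}_c$ to $\psi_{F,\mathcal{N}}\in\mathcal{A}_{c,F}$, obtain a family $(\theta_G)_G$ with $\theta_G\in\mathcal{A}_{c,G}$ that does the job there, and then replace each $\theta_G$ by a member of $\Psi(G)$ up to an arbitrarily small error, using that $\Psi(G)$ saturates to $\mathcal{A}_{c,G}$ (second paragraph); a routine truncation estimate then finishes it.

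I do not anticipate a real obstacle: the substantive work is already contained in Lemma~\ref{BasesCApp}, and above all in its second part, where the monotonicity of cumulative distribution functions and the choice of a sufficiently fine $F$-net carry the weight. The one point that calls for a little care is keeping the saturation and truncation manipulations honest when deducing the last basis axiom for $(\Psi(F))_F$ from that of $\mathcal{A}_c$.
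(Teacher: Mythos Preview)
Your proposal is correct and is precisely the approach the paper takes: its entire proof reads ``Combine Theorem~\ref{thm:BasisCApp} and Lemma~\ref{BasesCApp}.'' You have simply unpacked this combination in detail, including the explicit check of the basis axioms that the paper leaves implicit.
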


\begin{proof}
Combine Theorem \ref{thm:BasisCApp} and Lemma \ref{BasesCApp}.
\end{proof}

The following result provides us with information about the Lindel{\"o}f index of the space $(\mathcal{F},\mathcal{A}_c)$.

\begin{thm}\label{pro:ContLindZero}
We have
\begin{displaymath}
\chi_{L}\left(\mathcal{F},\mathcal{A}_c\right) = 0.
\end{displaymath}
\end{thm}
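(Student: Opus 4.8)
The plan is to show that $(\mathcal{F},\mathcal{A}_c)$ is $\epsilon$-Lindel\"of for every $\epsilon > 0$, which forces $\chi_L(\mathcal{F},\mathcal{A}_c) = 0$. By Theorem \ref{thm:AltBaseCApp} it suffices to work with the basis $(\Psi(F))_{F \in \mathcal{F}}$, so fix $\epsilon > 0$ and a choice $\Phi = (\psi_{F,\mathcal{N}_F})_{F \in \mathcal{F}} \in \Pi_{F \in \mathcal{F}} \Psi(F)$; I must produce a countable family of $\psi_{F,\mathcal{N}_F}$-balls of radius $\epsilon$ covering $\mathcal{F}$. The key observation is that $\psi_{F,\mathcal{N}}(G)$ only looks at the values of $G$ on the finite set $\mathcal{N}$, and each such value lies in $[0,1]$; this is what makes a countable cover possible.

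Concretely, I would first fix a countable dense subset $Q \subset [0,1]$ (say the rationals) together with, for each $F \in \mathcal{F}$, the finite $F$-net $\mathcal{N}_F = \{x_1^F < \cdots < x_{m_F}^F\}$ coming from $\Phi$. The natural move is to discretize: for a tuple of rationals one wants a cdf realizing (approximately) those values at the relevant net points. But the net $\mathcal{N}_F$ depends on $F$, so one cannot index the cover directly by rational tuples on a fixed grid. Instead I would exploit that the \emph{centers} of the balls may be chosen cleverly: it is enough to find a countable set $\mathcal{C} \subset \mathcal{F}$ such that for every $G \in \mathcal{F}$ there is $F \in \mathcal{C}$ with $\psi_{F,\mathcal{N}_F}(G) < \epsilon$, i.e. $\sup_{x \in \mathcal{N}_F}|F(x) - G(x)| < \epsilon$. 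A clean way to get such a $\mathcal{C}$: take all cdf's that are finite convex combinations, with rational weights, of Dirac-type (unit-step) cdf's $\mathbf{1}_{[q,\infty)}$ with $q \in \mathbb{Q}$ — this is a countable, $\mathcal{T}_w$-dense (indeed uniformly dense among step cdf's) subset of $\mathcal{F}$. Then given $G$, one approximates $G$ uniformly within $\epsilon$ by such a step cdf $F \in \mathcal{C}$ (standard: partition $[0,1]$ finely in the value-direction and read off the corresponding quantile jumps), so a fortiori $|F(x)-G(x)| \le \epsilon/2 < \epsilon$ for all $x$, hence for all $x \in \mathcal{N}_F$, giving $\psi_{F,\mathcal{N}_F}(G) < \epsilon$. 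Thus $\{B_{\psi_{F,\mathcal{N}_F}}(F,\epsilon) \mid F \in \mathcal{C}\}$ is the desired countable cover.

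Since this works for every $\Phi$ in the basis and every $\epsilon > 0$, the space is $\epsilon$-Lindel\"of for all $\epsilon > 0$, hence $\chi_L(\mathcal{F},\mathcal{A}_c) = \inf\{\alpha > 0 \mid (\mathcal{F},\mathcal{A}_c) \text{ is } \alpha\text{-Lindel\"of}\} = 0$. I would also note that one must check the $\Psi(F)$ are genuine basis elements so that covering by $\Psi$-balls controls covering by arbitrary $\mathcal{A}_c$-balls up to arbitrarily small slack; but this is exactly the content of Theorem \ref{thm:AltBaseCApp}, and the $\epsilon$ already absorbs the slack.

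The main obstacle I anticipate is the $F$-dependence of the net $\mathcal{N}_F$: a naive "fix a countable grid of value-tuples" argument fails because different centers demand values at different points. The resolution above sidesteps this by choosing centers that approximate $G$ \emph{uniformly} (so simultaneously at every point, in particular at whatever net is attached to that center), reducing the problem to the separability of $(\mathcal{F}, D_u)$ restricted to step cdf's — or more precisely, to the fact that every cdf is a uniform limit of rational step cdf's. Verifying that last density statement carefully (monotonicity and right-continuity of $G$, handling the tails $0$ and $1$) is the only genuinely computational point, and it is routine.
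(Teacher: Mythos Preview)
Your argument breaks at exactly the step you call ``routine'': it is \emph{not} true that every cdf is a uniform limit of rational step cdf's, because $(\mathcal{F},D_u)$ is not separable. Take $G=\mathbf{1}_{[\sqrt{2},\infty)}$. Any $F$ in your set $\mathcal{C}$ has a finite rational jump set, hence is constant, say equal to $c$, on an open interval containing $\sqrt{2}$; on that interval $G$ jumps from $0$ to $1$, so $D_u(F,G)\geq\max(c,1-c)\geq 1/2$. Thus no $F\in\mathcal{C}$ satisfies $D_u(F,G)<\epsilon$ for small $\epsilon$, and your inference $\psi_{F,\mathcal{N}_F}(G)\leq D_u(F,G)<\epsilon$ collapses. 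The difficulty you correctly identified---that $\mathcal{N}_F$ depends on $F$---is not in fact sidestepped by passing to uniform approximation, because uniform approximation is unavailable for discontinuous $G$.

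The paper circumvents this by using the L\'evy-metric basis of Theorem~\ref{pro:ContAppUnderLevMet} rather than the $\Psi$-basis. Each $L_{1/n}$ metrizes $\mathcal{T}_w$, and $(\mathcal{F},\mathcal{T}_w)$ \emph{is} separable; fix a countable weakly dense $\mathcal{D}$. Given $\Phi=(\phi_F)_F$ and $\epsilon>0$, attach to each $F$ a pair $(n_F,D_F)\in\mathbb{N}_0\times\mathcal{D}$ with $\phi_F\leq L_{1/n_F}(F,\cdot)+\epsilon/3$ and $L_{1/n_F}(F,D_F)<\epsilon/3$; pick one representative center $H_{n,D}$ per occurring pair $(n,D)$, and the triangle inequality for $L_{1/n}$ shows the resulting countable family of $\Phi$-balls of radius $\epsilon$ covers $\mathcal{F}$. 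The point is that the L\'evy basis gives access to the separability the space actually has (weak), whereas the $\Psi$-basis hard-codes a $D_u$-type comparison and $D_u$-separability fails.
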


\begin{proof}
Fix a basis $\left(\mathcal{B}_F\right)_{F \in \mathcal{F}}$ for $\mathcal{A}_c$, $\displaystyle{\left(\phi_F\right)_{F \in \mathcal{F}} \in \Pi_{F \in \mathcal{F}} \mathcal{B}_F}$ and $\epsilon > 0$. The space $(\mathcal{F},\mathcal{T}_w)$ being separable (\cite{Pa}), we fix a countable set $\mathcal{D} \subset \mathcal{F}$ which is dense for the weak topology. Now, the assignment of collections 
\begin{displaymath}
(\{L_{1/n}(F,\cdot) \mid n \in \mathbb{N}_0\})_{F \in \mathcal{F}}
\end{displaymath}
being a basis for $\mathcal{A}_c$ (Theorem \ref{pro:ContAppUnderLevMet}), there exist for each $F \in \mathcal{F}$: 

1) a number $n_F \in \mathbb{N}_0$ such that 
\begin{equation}
\phi_F(G) < L_{1/n_F}(F,G) + \epsilon/3\label{eq1} 
\end{equation}
for each $G \in \mathcal{F}$

2) an element $D_F \in \mathcal{D}$ such that 
\begin{equation}
L_{1/n_F}(F,D_F) < \epsilon/3.\label{eq2} 
\end{equation}
Combining (\ref{eq1}) and (\ref{eq2}) we have, for $F,G \in \mathcal{F}$, 
\begin{eqnarray}
\phi_F(G) &<& L_{1/n_F}(F,G) + \epsilon/3\label{eq3}\\ 
&\leq& L_{1/n_F}(F,D_F) + L_{1/n_F}(D_F,G) + \epsilon/3\nonumber\\
&<& L_{1/n_F}(D_F,G) + 2\epsilon/3.\nonumber
\end{eqnarray}
Now consider the function 
\begin{displaymath}
\zeta : \mathcal{F} \rightarrow \mathbb{N}_0 \times \mathcal{D} 
\end{displaymath}
defined by $\zeta(F) = (n_F,D_F)$ and fix for each $(n,D) \in \zeta(\mathcal{F})$ a cdf $H_{n,D} \in \mathcal{F}$ such that  $\zeta(H_{n,d})  = (n,D)$. Thus, by (\ref{eq3}),
\begin{equation}
\phi_{H_{n,D}}(G) < L_{1/n}(D,G) + 2\epsilon/3\label{eq4}
\end{equation}
for each $G \in \mathcal{F}$. Consider the countable set $\mathcal{C} = \{H_{n,D} \mid (n,D) \in \zeta(\mathcal{F})\}.$ We claim that 
\begin{displaymath}
\sup_{F \in \mathcal{F}} \myinf_{C \in \mathcal{C}} \phi_{C}(H) \leq \epsilon. 
\end{displaymath}
Indeed, for $F \in \mathcal{F}$ it suffices to consider the point $H_{n_F,D_F} \in \mathcal{C}$ since by (\ref{eq1}) and (\ref{eq4})
\begin{displaymath}
\phi_{H_{n_F,D_F}}(F) < L_{1/n_F}(D_F,F) + 2\epsilon/3 < \epsilon. 
\end{displaymath}
This finishes the proof.
\end{proof}

\paragraph{} We call a set $\mathcal{D} \subset \mathcal{F}$ {\em weakly relatively sequentially compact} iff it is relatively sequentially compact under the weak topology on $\mathcal{F}$, i.e. each sequence in $\mathcal{D}$ contains a weakly convergent subsequence.

\paragraph{} Since the weak topology is the topological coreflection of $\mathcal{A}_c$ (Theorem \ref{thm:RelApp}), the space $(\mathcal{F},\mathcal{A}_c)$ is locally countably generated and sequentially complete (Theorem \ref{thm:sc}) and $\chi_L\left(\mathcal{F},\mathcal{A}_c\right) = 0$ (Theorem \ref{pro:ContLindZero}), we may apply Theorem \ref{thm:LocalCompactnessLinks} to conclude that

\begin{thm}\label{thm:LocalCompactnessLinksContApp}
For a set $\mathcal{D} \subset \mathcal{F}$ we have
\begin{displaymath}
\left(\chi_{rsc}\right)_{\mathcal{A}_c}(\mathcal{D}) = \left(\chi_{rc}\right)_{\mathcal{A}_c}(\mathcal{D}). 
\end{displaymath}
Furthermore,
\begin{displaymath}
\textrm{$\mathcal{D}$ is weakly relatively sequentially compact} \Leftrightarrow \left(\chi_{rsc}\right)_{\mathcal{A}_c}(\mathcal{D}) = 0.
\end{displaymath}
\end{thm}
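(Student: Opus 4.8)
The plan is to deduce Theorem \ref{thm:LocalCompactnessLinksContApp} as a direct corollary of the general compactness result Theorem \ref{thm:LocalCompactnessLinks}, by checking that the space $(\mathcal{F},\mathcal{A}_c)$ meets all the hypotheses of that theorem. Concretely, Theorem \ref{thm:LocalCompactnessLinks} requires three things: that $X$ be locally countably generated, that $\chi_L(X) = 0$, and (for the equivalence with relative sequential compactness) that $X$ be sequentially complete. All three are already available for $(\mathcal{F},\mathcal{A}_c)$: local countable generation and sequential completeness are provided by Theorem \ref{thm:sc}, and $\chi_L(\mathcal{F},\mathcal{A}_c) = 0$ is Theorem \ref{pro:ContLindZero}.

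The first step is therefore to invoke Theorem \ref{thm:sc} to record that $(\mathcal{F},\mathcal{A}_c)$ is locally countably generated and sequentially complete, and Theorem \ref{pro:ContLindZero} to record that its Lindel{\"o}f index vanishes. The second step is to apply the "in particular" clause of Theorem \ref{thm:LocalCompactnessLinks} with $X = (\mathcal{F},\mathcal{A}_c)$ and $A = \mathcal{D}$: since $\chi_L(X) = 0$, the two-sided inequality collapses to the equality $(\chi_{rsc})_{\mathcal{A}_c}(\mathcal{D}) = (\chi_{rc})_{\mathcal{A}_c}(\mathcal{D})$. The third step is to apply the final clause of Theorem \ref{thm:LocalCompactnessLinks}, which (given sequential completeness) gives that $\mathcal{D}$ is relatively sequentially compact in the topological coreflection if and only if $(\chi_{rsc})_{\mathcal{A}_c}(\mathcal{D}) = 0$.

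The only point that needs a word of justification is the identification of "relatively sequentially compact in the topological coreflection of $\mathcal{A}_c$" with "weakly relatively sequentially compact." This is immediate from Theorem \ref{thm:RelApp}, which identifies the topological coreflection of $\mathcal{A}_c$ with the weak topology $\mathcal{T}_w$, together with the definition of weak relative sequential compactness given just before the theorem statement. So the plan is: (1) cite Theorems \ref{thm:sc}, \ref{pro:ContLindZero}, and \ref{thm:RelApp} to verify the hypotheses and translate the conclusion; (2) apply Theorem \ref{thm:LocalCompactnessLinks}.

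I do not expect any genuine obstacle here, since this theorem is explicitly set up as an application of the general machinery — indeed the surrounding text already spells out the verification. The one thing to be careful about is simply to state the hypothesis-checking cleanly and not to re-prove any of the cited facts; the substance of the argument lives entirely in Theorems \ref{thm:LocalCompactnessLinks}, \ref{thm:sc}, and \ref{pro:ContLindZero}, whose proofs have already been given or cited.
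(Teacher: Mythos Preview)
Your proposal is correct and matches the paper's own argument exactly: the theorem is stated as an immediate application of Theorem \ref{thm:LocalCompactnessLinks}, with the hypotheses supplied by Theorems \ref{thm:sc}, \ref{pro:ContLindZero}, and \ref{thm:RelApp}. There is nothing to add or correct.
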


\paragraph{} Recall that a collection $\mathcal{D} \subset \mathcal{F}$ is {\em tight} (\cite{Kal}) iff for each $\epsilon > 0$ there exists a constant $M  \in \mathbb{R}^+_0$ such that $\max \left\{F(-M) ,1 - F(M)\right\} \leq \epsilon$ for all $F \in \mathcal{D}$.

\paragraph{} We now define the number
\begin{eqnarray*}
\chi_e(\mathcal{D}) = \myinf_{M > 0} \sup_{F \in \mathcal{D}} \max\left\{F(-M),1 - F(M)\right\}.\index{$\chi_e(\mathcal{D})$} 
\end{eqnarray*} 
We call $\chi_e(\mathcal{D})$ the \textit{escape index of}\index{escape index} $\mathcal{D}$ (not to be confused with the tightness indices discussed in \cite{BLV11}). Notice that $\mathcal{D}$ is tight if and only if $\chi_e(\mathcal{D}) = 0$.

\paragraph{} The following simple example shows that the escape index produces meaningful non-zero values.

\begin{vb}
Fix $0 < \alpha < 1$ and let $\mathcal{F}$ be the set of all probability distributions $F_n = (1-\alpha)F_{\delta_0} + \alpha F_{\delta_n}, \phantom{1} n \in \mathbb{N}_0$, $F_{\delta_x}$ standing for the Dirac probability distribution making a jump of height 1 at $x$. Then $\chi_e(\mathcal{D}) = \alpha$.
\end{vb}

\paragraph{} We finally come to a quantitative generalization of Prokhorov's Theorem for the continuity approach structure. As in the classical case, the proof is based on Helly's Selection Principle (\cite{Kal}).

\begin{thm}[Helly's Selection Principle]
Fix a number $M \in \mathbb{R}^+_0$ and a sequence $(F_n : \left[-M,M\right[ \rightarrow \left[0,1\right])_n$ of non-decreasing right-continuous functions. Then there exists a subsequence $\left(F_{k_n}\right)_n$ and a non-decreasing right-continuous function $F : \left[-M,M\right[ \rightarrow \left[0,1\right]$ such that $F_{k_n}(x) \rightarrow F(x)$ for each point $x$ at which $F$ is continuous.
\end{thm}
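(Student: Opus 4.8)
The plan is to reproduce the classical argument behind Helly's principle, adapted to the half-open interval $[-M,M[$. It proceeds in three steps: a diagonal extraction producing pointwise convergence on a countable dense set, the construction of the candidate limit $F$ as the right-continuous envelope of that pointwise limit, and a squeeze estimate establishing convergence at the continuity points of $F$.

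First I would fix a countable dense subset $D=\{d_1,d_2,\dots\}$ of $[-M,M[$. Since each sequence $(F_n(d_j))_n$ lies in the compact set $[0,1]$, a standard diagonal procedure yields a single subsequence $(F_{k_n})_n$ for which $g(d):=\lim_n F_{k_n}(d)$ exists in $[0,1]$ for every $d\in D$. Monotonicity of the $F_n$ passes to the limit, so $g$ is non-decreasing on $D$.

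Next I would define $F:[-M,M[\to[0,1]$ by
\[
F(x)=\inf\{\,g(d)\mid d\in D,\ d>x\,\}.
\]
That $F$ is non-decreasing is immediate from the monotonicity of $g$, and that $F$ is right-continuous follows from its being an infimum of $g$ over right-neighbourhoods intersected with the dense set $D$; both verifications are routine. Now let $x$ be a continuity point of $F$. For $d',d''\in D$ with $d'<x<d''$, monotonicity of each $F_{k_n}$ gives $F_{k_n}(d')\le F_{k_n}(x)\le F_{k_n}(d'')$, hence, letting $n\to\infty$,
\[
g(d')\le\liminf_n F_{k_n}(x)\le\limsup_n F_{k_n}(x)\le g(d'').
\]
From the definition of $F$ one reads off $g(d'')\le F(d'')$ and $F(y)\le g(d')$ for every $y<d'$. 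Letting $d''\downarrow x$ and $y\uparrow x$ through points of $D$ and using right-continuity of $F$ together with the assumed continuity of $F$ at $x$ (so that $F(d'')\to F(x)$ and $F(y)\to F(x^-)=F(x)$), the two outer terms are squeezed to $F(x)$; therefore $F_{k_n}(x)\to F(x)$.

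The only genuinely delicate point is the squeeze in Step 3: one must extract from the definition of the envelope the precise inequalities linking $g$ on $D$ to the one-sided limits of $F$, and then invoke the continuity hypothesis to collapse $F(x^-)$, $F(x)$ and $F(x^+)$ to a common value. A minor additional point is the left endpoint $x=-M$, which has no left neighbourhood in the domain; this is handled by a separate remark (e.g.\ by adjoining $-M$ to the set of points on which the diagonal subsequence is forced to converge, or by reading the convergence claim at interior continuity points, which is all that is needed later). Everything else — the diagonal extraction and the elementary check that $F$ is a non-decreasing right-continuous function — is entirely standard.
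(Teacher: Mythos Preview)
The paper does not supply its own proof of this statement: Helly's Selection Principle is quoted as a classical result with a reference to \cite{Kal} and is then used as a black box in the proof of Theorem~\ref{Prokhorov}. Your argument is the standard classical proof (diagonal extraction on a countable dense set, right-continuous envelope, squeeze at continuity points) and is correct; the endpoint caveat you flag at $x=-M$ is real but harmless for the paper's application, since in the proof of Theorem~\ref{Prokhorov} only convergence at interior continuity points of the limit is ever invoked via the basis of Theorem~\ref{thm:AltBaseCApp}.
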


\begin{thm}[Quantitative Prokhorov's Theorem]\label{Prokhorov}
For $\mathcal{D} \subset \mathcal{F}$ we have
\begin{displaymath}
\left(\chi_{rsc}\right)_{\mathcal{A}_c}(\mathcal{D}) = \chi_e(\mathcal{D}).
\end{displaymath}
\end{thm}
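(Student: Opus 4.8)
The plan is to establish the two inequalities $(\chi_{rsc})_{\mathcal{A}_c}(\mathcal{D}) \leq \chi_e(\mathcal{D})$ and $(\chi_{rsc})_{\mathcal{A}_c}(\mathcal{D}) \geq \chi_e(\mathcal{D})$ separately, working throughout with the basis $(\Psi(F))_{F \in \mathcal{F}}$ supplied by Theorem \ref{thm:AltBaseCApp}. With that basis the limit operator takes the form $\lambda_{\mathcal{A}_c}(F_n \rightarrow G) = \sup_{\mathcal{N}} \limsup_n \psi_{G,\mathcal{N}}(F_n)$, the supremum ranging over all $G$-nets $\mathcal{N}$; I will freely use that $\lambda_{\mathcal{A}_c}(F_n \rightarrow G) < \delta$ implies $F_n \stackrel{\delta}{\rightarrow} G$, and conversely that $F_n \stackrel{\delta}{\rightarrow} G$ implies $\limsup_n |F_n(x) - G(x)| \leq \delta$ for every continuity point $x$ of $G$ (take $\mathcal{N} = \{x\}$). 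Both inequalities will exploit monotonicity of cdf's together with Helly's Selection Principle.

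For $(\chi_{rsc})_{\mathcal{A}_c}(\mathcal{D}) \leq \chi_e(\mathcal{D})$, fix $\epsilon > \chi_e(\mathcal{D})$; it suffices to show $\mathcal{D}$ is $\epsilon$-relatively sequentially compact. Choose $M > 0$ with $\epsilon'' := \sup_{F \in \mathcal{D}} \max\{F(-M), 1 - F(M)\} < \epsilon$ and let $(F_n)_n$ be a sequence in $\mathcal{D}$. Applying Helly's Selection Principle to the restrictions $(F_n|_{[-M,M[})_n$ yields a subsequence $(F_{k_n})_n$ and a non-decreasing right-continuous $F_0 : [-M,M[ \rightarrow [0,1]$ with $F_{k_n}(x) \rightarrow F_0(x)$ at every continuity point $x$ of $F_0$. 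Extend $F_0$ to a cdf $G \in \mathcal{F}$ by setting $G = 0$ on $]-\infty,-M[$ and $G = 1$ on $[M,\infty[$; a routine check shows $G$ is non-decreasing, right-continuous with the correct limits. I would then bound $\lambda_{\mathcal{A}_c}(F_{k_n} \rightarrow G)$ by splitting an arbitrary $G$-net $\mathcal{N}$ into three parts: at a point $x \in \mathcal{N} \cap [-M,M[$, which is a continuity point of $F_0$, Helly gives $|F_{k_n}(x) - G(x)| \rightarrow 0$; at $x \in \mathcal{N}$ with $x < -M$, monotonicity gives $F_{k_n}(x) \leq F_{k_n}(-M) \leq \epsilon''$; at $x \in \mathcal{N}$ with $x \geq M$, monotonicity gives $1 - F_{k_n}(x) \leq 1 - F_{k_n}(M) \leq \epsilon''$. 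Hence $\limsup_n \psi_{G,\mathcal{N}}(F_{k_n}) \leq \epsilon''$ for every $\mathcal{N}$, so $\lambda_{\mathcal{A}_c}(F_{k_n} \rightarrow G) \leq \epsilon'' < \epsilon$, whence $F_{k_n} \stackrel{\epsilon}{\rightarrow} G$. Thus $\mathcal{D}$ is $\epsilon$-relatively sequentially compact, and letting $\epsilon \downarrow \chi_e(\mathcal{D})$ gives the inequality.

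For the reverse inequality, suppose toward a contradiction that $(\chi_{rsc})_{\mathcal{A}_c}(\mathcal{D}) < \epsilon < \chi_e(\mathcal{D})$ and fix $\epsilon'$ with $(\chi_{rsc})_{\mathcal{A}_c}(\mathcal{D}) < \epsilon' < \epsilon$, so that $\mathcal{D}$ is $\epsilon'$-relatively sequentially compact. Since $\epsilon < \chi_e(\mathcal{D})$, for each $n \in \mathbb{N}_0$ there is $F_n \in \mathcal{D}$ with $\max\{F_n(-n), 1 - F_n(n)\} > \epsilon$, and there is an infinite set $I \subseteq \mathbb{N}_0$ on which one alternative holds uniformly; say $1 - F_n(n) > \epsilon$ for all $n \in I$ (the case $F_n(-n) > \epsilon$ is symmetric). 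Extracting from $(F_n)_{n \in I}$ an $\epsilon'$-convergent subsequence $F_{k_n} \stackrel{\epsilon'}{\rightarrow} G$ with $G \in \mathcal{F}$, pick a continuity point $x_0$ of $G$ with $G(x_0) > 1 - (\epsilon - \epsilon')$ (possible since $G(x) \rightarrow 1$ and $G$ has at most countably many discontinuities). Then $\limsup_n |F_{k_n}(x_0) - G(x_0)| \leq \epsilon'$ forces $F_{k_n}(x_0) > 1 - \epsilon$ for $n$ large, and since $k_n \rightarrow \infty$ monotonicity yields $1 - F_{k_n}(k_n) \leq 1 - F_{k_n}(x_0) < \epsilon$ for $n$ large, contradicting $1 - F_{k_n}(k_n) > \epsilon$. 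Hence $(\chi_{rsc})_{\mathcal{A}_c}(\mathcal{D}) \geq \chi_e(\mathcal{D})$, and combining the two inequalities proves the theorem.

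The step I expect to be most delicate is the construction of the candidate limit $G$ in the first inequality: Helly only controls the pointwise limits of the truncated cdf's at continuity points of $F_0$ on $[-M,M[$, whereas the functions $\psi_{G,\mathcal{N}}$ are phrased in terms of continuity points of $G$ on all of $\mathbb{R}$, so one must check carefully that $G$ is a genuine cdf and reconcile the two notions of continuity point, in particular at the truncation endpoints $\pm M$. The escape estimates themselves are then immediate from monotonicity and the choice of $M$.
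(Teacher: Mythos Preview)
Your argument is correct. The first inequality $(\chi_{rsc})_{\mathcal{A}_c}(\mathcal{D}) \leq \chi_e(\mathcal{D})$ follows exactly the paper's route: Helly on the truncation $[-M,M[$, extension by $0$ and $1$, and a bound on the limit operator via the basis $(\Psi(F))_F$. Your case analysis over a $G$-net $\mathcal{N}$ is simply a spelled-out version of what the paper dismisses with ``we clearly have $\lambda_{\mathcal{A}_c}(F_{k_n} \to \widetilde{G}) \leq \chi_e(\mathcal{D}) + \epsilon$''; the endpoint issue you flag is real but harmless, since points of $\mathcal{N}$ with $x \geq M$ are handled by the escape bound rather than by Helly, and a continuity point of $G$ in $[-M,M[$ is automatically a continuity point of $F_0$ in its domain.

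For the reverse inequality you take a genuinely different path. The paper proves $\chi_e(\mathcal{D}) \leq (\chi_{rc})_{\mathcal{A}_c}(\mathcal{D})$ via a finite-cover argument with the basis $(\Phi(F))_F$: one fixes $\alpha$, extracts a finite $\epsilon$-net $\mathcal{E}$ for $\mathcal{D}$ in the $\phi_{\cdot,\alpha}$-sense, chooses a uniform $\widetilde{M}$ controlling the tails of the finitely many $G \in \mathcal{E}$, and reads off the tail bound for every $F \in \mathcal{D}$ from $\phi_{G,\alpha}(F)$. This is then combined with $(\chi_{rsc})_{\mathcal{A}_c} = (\chi_{rc})_{\mathcal{A}_c}$ from Theorem~\ref{thm:LocalCompactnessLinksContApp}, which in turn rests on the general compactness result Theorem~\ref{thm:LocalCompactnessLinks} and the Lindel{\"o}f computation Theorem~\ref{pro:ContLindZero}. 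Your sequential contradiction argument bypasses all of that: it works directly with $\chi_{rsc}$, needs only the basis $(\Psi(F))_F$ and monotonicity of cdf's, and never touches $\chi_{rc}$, the Lindel{\"o}f index, or Theorem~\ref{thm:LocalCompactnessLinks}. The gain is a more self-contained and elementary proof of this particular theorem; the cost is that the paper's detour through $\chi_{rc}$ is precisely what illustrates the abstract machinery of Section~2, which is the paper's advertised purpose.
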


\begin{proof}
Recall that, by Theorem \ref{thm:LocalCompactnessLinksContApp}, 
\begin{displaymath}
\left(\chi_{rsc}\right)_{\mathcal{A}_c}\left(\mathcal{D}\right) = \left(\chi_{rc}\right)_{\mathcal{A}_c}\left(\mathcal{D}\right).
\end{displaymath}

1) $\left(\chi_{rsc}\right)_{\mathcal{A}_c}(\mathcal{D}) \leq \chi_e(\mathcal{D})$: Fix $\epsilon > 0$ and a sequence $(F_n)_n$ in $\mathcal{D}$. Now we choose a constant $M \in \mathbb{R}^+_0$ in such a way that for each $F \in \mathcal{D}$ it holds that $\max \left\{F(-M), 1 - F(M)\right\} \leq \chi_e(\mathcal{D}) + \epsilon$. Then Helly's Selection Principle furnishes a subsequence $\left(F_{k_n}\right)_n$ and a non-decreasing right-continuous function $G : \left[-M,M\right[ \rightarrow \left[0,1\right]$ such that $F_{k_n}(x) \rightarrow G(x)$ for all points $x$ at which $G$ is continuous. Finally, we define $\widetilde{G} \in \mathcal{F}$ by
\begin{displaymath}
 \widetilde{G}(x) = \left\{\begin{array}{clrrr}      
0 	& \textrm{ if }& x < -M\\       
G(x) & \textrm{ if }& -M \leq x < M\\
1 & \textrm{ if }& x \geq M
\end{array}\right..
\end{displaymath}
But then, by Theorem \ref{thm:AltBaseCApp}, we clearly have $\lambda_{\mathcal{A}_c}\left(F_{k_n} \rightarrow \widetilde{G}\right) \leq \chi_e(\mathcal{D}) + \epsilon$ and hence $\left(\chi_{rsc}\right)_{\mathcal{A}_c}(\mathcal{D}) \leq \chi_e(\mathcal{D})$.

2) $\chi_e(\mathcal{D}) \leq \left(\chi_{rc}\right)_{\mathcal{A}_c}(\mathcal{D})$: Let $\epsilon > 0$. Then for $\alpha \in \mathbb{R}^+_0$ there exists a finite collection $\mathcal{E} \subset \mathcal{F}$ such that for all $F \in \mathcal{D}$ we can find $G \in \mathcal{E}$ for which $\phi_{G,\alpha}(F) \leq \left(\chi_{rc}\right)_{\mathcal{A}_c}(\mathcal{D}) + \epsilon/2$. Since $\mathcal{E}$ is finite we may choose a constant $\widetilde{M} \in  \mathbb{R}^+_0$ such that for each $G \in \mathcal{E}$ we have $G\left(-\widetilde{M}\right) \leq \epsilon/2$ and $G\left(\widetilde{M}\right) \geq 1 - \epsilon/2$. Now put $M = \widetilde{M} + \alpha$, fix $F \in \mathcal{D}$ and choose $G \in \mathcal{E}$ in such a way that $\phi_{G,\alpha}(F) \leq \left(\chi_{rc}\right)_{\mathcal{A}_c}(\mathcal{D}) + \epsilon/2$. Then we have on the one hand
\begin{eqnarray*}
F(-M) &=& F\left(-\widetilde{M} - \alpha\right)\\ &\leq& G\left(- \widetilde{M}\right) + (\left(\chi_{rc}\right)_{\mathcal{A}_c}(\mathcal{D}) + \epsilon/2)\\ &\leq& \left(\chi_{rc}\right)_{\mathcal{A}_c}(\mathcal{D}) + \epsilon,
\end{eqnarray*}
and on the other
\begin{eqnarray*}
F(M) &=& F\left(\widetilde{M} + \alpha\right)\\ &\geq& G\left(\widetilde{M}\right) - (\left(\chi_{rc}\right)_{\mathcal{A}_c}(\mathcal{D}) + \epsilon/2)\\ &\geq& 1 - (\left(\chi_{rc}\right)_{\mathcal{A}_c}(\mathcal{D}) + \epsilon),
\end{eqnarray*}
entailing that $\chi_e(\mathcal{D}) \leq \left(\chi_{rc}\right)_{\mathcal{A}_c}\left(\mathcal{D}\right)$.

\end{proof}

\begin{gev}[Classical Prokhorov's Theorem]
For $\mathcal{D} \subset \mathcal{F}$ the following are equivalent.
\begin{itemize}
	\item[(1)] The collection $\mathcal{D}$ is weakly relatively sequentially compact.
	\item[(2)] The collection $\mathcal{D}$ is tight.
\end{itemize}
\end{gev}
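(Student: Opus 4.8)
The plan is to read this equivalence off directly from the quantitative statement just proved, combined with the two ``boundary'' characterizations recorded earlier. The point is that the corollary is nothing but the ``value $0$'' instance of Theorem~\ref{Prokhorov}, so essentially all the work has already been done.

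Concretely, I would proceed as follows. By Theorem~\ref{Prokhorov} we have $\left(\chi_{rsc}\right)_{\mathcal{A}_c}(\mathcal{D}) = \chi_e(\mathcal{D})$, hence in particular $\left(\chi_{rsc}\right)_{\mathcal{A}_c}(\mathcal{D}) = 0$ if and only if $\chi_e(\mathcal{D}) = 0$. Next I would invoke Theorem~\ref{thm:LocalCompactnessLinksContApp}, which says that $\mathcal{D}$ is weakly relatively sequentially compact precisely when $\left(\chi_{rsc}\right)_{\mathcal{A}_c}(\mathcal{D}) = 0$, and the remark immediately following the definition of the escape index, which says that $\mathcal{D}$ is tight precisely when $\chi_e(\mathcal{D}) = 0$. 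Chaining these three equivalences yields
\begin{align*}
\mathcal{D} \text{ is weakly relatively sequentially compact}
&\iff \left(\chi_{rsc}\right)_{\mathcal{A}_c}(\mathcal{D}) = 0 \\
&\iff \chi_e(\mathcal{D}) = 0 \\
&\iff \mathcal{D} \text{ is tight},
\end{align*}
which is exactly the asserted equivalence of (1) and (2).

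There is no genuine obstacle here: the substantive content — the abstract compactness result of Section~2, the identification of the topological coreflection of $\mathcal{A}_c$ with $\mathcal{T}_w$ (Theorem~\ref{thm:RelApp}), the vanishing of the Lindel\"of index (Theorem~\ref{pro:ContLindZero}), and the two inequalities making up Theorem~\ref{Prokhorov} — has all been established beforehand. The only thing worth a moment's care is making sure that ``weakly relatively sequentially compact'' in the corollary is literally the notion of relative sequential compactness in the topological coreflection used in Theorem~\ref{thm:LocalCompactnessLinksContApp} (it is, by definition and by Theorem~\ref{thm:RelApp}), and that ``tight'' matches the condition $\chi_e(\mathcal{D}) = 0$ (it does, by the definitions given). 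Accordingly I would keep the proof to the two or three lines of the displayed chain of equivalences above.
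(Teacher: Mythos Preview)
Your proposal is correct and is precisely the intended argument: the paper states this corollary without proof, treating it as an immediate consequence of Theorem~\ref{Prokhorov} together with Theorem~\ref{thm:LocalCompactnessLinksContApp} and the observation that tightness is equivalent to $\chi_e(\mathcal{D}) = 0$. Your chain of equivalences is exactly how the corollary is meant to be read off.
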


\end{document}